\documentclass[12pt]{amsart}

\usepackage{amsfonts}

\usepackage{amssymb, amsmath, amsthm}

\usepackage[french,english]{babel}

\newtheorem{thm}{Theorem}
\newtheorem{lem}{Lemma}[section]
\newtheorem{prop}[lem]{Proposition}
\newtheorem{cor}[lem]{Corollary}
\newtheorem*{defi}{Definition}


\newcommand{\C}{{\bf{C}}}
\newcommand{\R}{{\bf{R}}}
\newcommand{\Q}{{\bf{Q}}}
\newcommand{\Z}{{\bf{Z}}}

\newcommand{\newabstract}[1]{%
  \par\bigskip
  \csname otherlanguage*\endcsname{#1}%
  \csname captions#1\endcsname
  \item[\hskip\labelsep\scshape\abstractname.]
}

\begin{document}
 
\title[Asynchronous rotations]{Unique ergodicity of asynchronous rotations, and application}
\author{Fran\c cois Maucourant}
\address{Universit\'e Rennes I, IRMAR UMR 6625, Campus de Beaulieu 35042 Rennes cedex -  France}
\email{francois.maucourant@univ-rennes1.fr}
\thanks{Universit\'e de Rennes 1 et CNRS (UMR 6625)}

\subjclass{37A17}

\maketitle
\begin{abstract} The main result of this paper is an analogue for a continuous family of tori of Kronecker-Weyl's unique ergodicity of irrational rotations. We show that the notion corresponding in this setup to irrationality, namely asynchronicity, is satisfied in some homogeneous dynamical systems. This is used to prove the ergodicity of naturals lifts of invariant measures. 
\newabstract{french} Nous \'etudions sur une famille continue de tores les rotations dites asynchrones, analogues aux rotations irrationnelles sur les tores classiques. Le r\'esultat principal est l'unique ergodicit\'e de ces rotations sur un mono\"\i de adapt\'e. Nous prouvons que la condition d'asynchronicit\'e est v\'erifi\'ee dans une famille d'exemples issue de la dynamique homog\`ene, ce qui nous permet de d\'eduire l'ergodicit\'e de relev\'es de certaines transformations dans des fibr\'es en tores. 
\end{abstract}

\section{Introduction} 
 
\subsection{Motivations}

 The original motivation of this study  is the inquiry of ergodic properties of torus extension of homogeneous dynamical systems. Such dynamics have drawn some attention recently - see for example \cite{MR2060024}, \cite{MR2811599}, \cite{MR3332893} for unipotent actions, and \cite{MR1870070} and \cite{MR2753608} for diagonal actions. \\

 As an informal example, consider a diagonal element $a \in \mathbf{SL}(d,\R)=G_0$ ($d\geq 2$) with positive diagonal entries, acting by left multiplication on the homogeneous space $\mathbf{SL}(d,\R)/\mathbf{SL}(d,\Z)=G_0/\Gamma_0$, and let $\mu$ be a probability invariant by $a$ and ergodic on $G_0/\Gamma_0$. The main interesting cases for our purpose occur when the measure $\mu$ is not algebraic. This dynamical system is a factor of the action of  $(a,\alpha)$, where $\alpha\in \R^d$ is arbitrary, by left multiplication on $(\mathbf{SL}(d,\R)\ltimes \R^d) /(\mathbf{SL}(d,\Z)\ltimes \Z^d)=G/\Gamma$. This latter space is a torus bundle above $G_0/\Gamma_0$. Amongst the possible $(a,\alpha)$-invariant measures that projects onto $\mu$, there is a particular one, denoted by $\lambda$, which decomposes into the Haar measure of tori on each fiber. It is natural to ask about its ergodicity with respect to $(a,\alpha)$.\\

 Following the classical Hopf argument (see \cite{MR2261076}), one is naturally led to inquire about ergodic properties of the strong stable foliation of $(a,\alpha)$. It turns out that this foliation contains the orbits of another action, namely the multiplication by $(e,\beta)$ on $G/\Gamma$, where $\beta\in \R^d$ is an eigenvector for $a$ associated to an eigenvalue $<1$. This action is an unipotent action, but since it is "vertical" (in the sense trivial in the factor $G_0/\Gamma_0$), Ratner's theory  yield in this case no more information than Kronecker-Weyl's uniform distribution on the torus. \\

 To visualize this action of $(e,\beta)$ on each fiber, one may think of it as a rotation by a fixed vector $\beta$ on a varying torus depending on the base-point. Here, we will prefer to think of it as the rotation by a varying vector $f_\beta(x)$ depending on the base-point $x$, on a fixed torus $\mathbf{T}^d$.
 One may hope in this situation that $f_\beta(x) \in \mathbf{T}^d$ is irrational for almost every $x$. It turns out that under appropriate assumptions, the rotations defined by $f_\beta$ satisfy a stronger property, namely {\em asynchronicity}.\\
 
 As we will see shortly, such rotations on torus bundle above a measured space like $(G_0/\Gamma_0,\mu)$ enjoy strong ergodic properties, enabling us in this setting to prove a unique ergodicity result. In some sense, this can be considered as a weak analogue of Furstenberg's unique ergodicity of horocyclic flow.\\
   
  Finally, we will return to the question of the ergodicity of $\lambda$ with respect to $(a,\alpha)$, and related mixing properties. \\
  
  These kind of fiber-wise system were also investigated independently by Damien Thomine \cite{Thomine}, using another point of view.

\subsection{Asynchronous rotations} 
  
 Let $(I,\mathcal{B}(I),\mathcal{L})$ be a standard probability space without atoms, and let $K$ be a torus $\mathbf{T}^d=(\R/\Z)^d$, for some integer $d\geq 1$. 
 We think of a measurable map $f:I\to K$ as the data, for each $x\in I$, of a rotation adding the angle $f(x)$ in a torus indexed by $x$. Despite what the above motivational example might suggest, in this abstract setting, the case $d=1$ of rotations on a family of circles above a probability space is already interesting, and contains most of the difficulties. \\
 
 The set of such measurable maps $\{ f : I\to K \}$, where we identify two maps if they coincide $\mathcal{L}$-almost everywhere, is naturally an abelian group  under pointwise addition of functions. We denote by $(K^I,+)$ this group, by a slight abuse of notation. We would like to study the 
 translation by $f$ in $K^I$, but as it lacks a nice topology, we consider a compactification of the group $(K^I,+)$, which will be a monoid, as follows.\\

 Let $\mathcal{M}_\mathcal{L}$ be the space of probability measures on $I\times K$ which project to $\mathcal{L}$ on the first factor. 
 To an element $f\in K^I$, we can associate the probability measure $\mathcal{D}_f$ on $I\times K$, supported by its graph, which is the pushforward of $\mathcal{L}$ by the map $x\in I\mapsto (x,f(x))\in I\times K$. This defines an embedding of $K^I$ into $\mathcal{M}_\mathcal{L}$. It is not hard to see that the group law $+$ on $K^I$ correspond to a fiber-wise convolution product $*$ on $\mathcal{M}_\mathcal{L}$, which turns $(\mathcal{M}_\mathcal{L},*)$ into an abelian monoid, with neutral $\mathcal{D}_0$, where $0:I\to K$ is the zero map.\\
 
 The space $\mathcal{M}_\mathcal{L}$ is equipped naturally with a weak-* topology, for which it is a compact metric space. A tricky fact is that the convolution product $(\mu,\nu)\mapsto \mu*\nu$ is {\em not } continuous of the two variables, but is of each variable separately. A more detailed description  of these objects, and explanations of the implied claims, are given in Section \ref{description}.\\
 
 There is a particular element in $\mathcal{M}_\mathcal{L}$, the measure $\lambda=\mathcal{L}\otimes Haar_{K}$. It satisfies the relation: $\forall \mu \in \mathcal{M}_\mathcal{L}$, $\mu*\lambda=\lambda$.\\

 We are interested in studying the dynamics of translation $\mathcal{D}_f*$ on the monoid $\mathcal{M}_\mathcal{L}$. Unsurprisingly, we now need a kind of irrationality condition.
 
\begin{defi} 
  The angle map $f:I\to K$ is said to be {\em asynchronous} if the image measure $f_*\mathcal{L}$ gives zero mass to any translate of any proper closed subgroup of $K$. Equivalently, for any non-trivial character $\chi \in \hat{K}$, $(\chi \circ f)_*\mathcal{L}$ has no atoms. 
\end{defi}
  
  Intuitively, for $d=1$, this means that one looks at an action by rotation on a family of circles indexed by $x\in I$, by angles $f(x)$, which are different from one another if picked randomly following the probability $\mathcal{L}$. For $d\geq 1$, it means that for almost every couple $(x,y)$, $f(x)$ and $f(y)$ do not belong to the same coset modulo any closed, strict subgroup of $K$.\\
   
\begin{thm} \label{mainthm} The following are equivalent.
\begin{enumerate}
\item The angle map $f:I\to K$ is asynchronous. 
\item The closure $\overline{ \{\mathcal{D}_{nf}\} }_{n\in \Z} $ contains $\lambda$.
\item The convolution action of $\mathcal{D}_f$ on $\mathcal{M}_\mathcal{L}$ is uniquely ergodic.
\end{enumerate}
 If these are true, the only invariant probability measure is the Dirac measure $\delta_{\lambda}$.
\end{thm}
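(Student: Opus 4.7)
The plan is to establish the cycle $(1)\Rightarrow(2)\Rightarrow(3)\Rightarrow(2)\Rightarrow(1)$; the identification of the unique invariant measure as $\delta_\lambda$ will drop out of the argument for $(2)\Rightarrow(3)$.

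For $(1)\Rightarrow(2)$, I would test weak-$*$ convergence of $\mathcal{D}_{nf}$ against functions of the form $g(x)\chi(k)$ with $g\in L^1(\mathcal{L})$ and $\chi\in\hat{K}\setminus\{0\}$. The pairing equals $\int g(x)\chi(f(x))^n\,d\mathcal{L}(x)$, which is the $n$-th Fourier coefficient of the complex measure $(\chi\circ f)_*(g\cdot\mathcal{L})$ on the unit circle. Since $g\cdot\mathcal{L}\ll\mathcal{L}$, asynchronicity forces this measure to be non-atomic, and Wiener's lemma yields $\frac{1}{N}\sum_{n<N}\bigl|\int g\,\chi(f)^n\,d\mathcal{L}\bigr|^2\to 0$. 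A diagonal argument across a countable dense subset of $L^1(\mathcal{L})$ and the countable group $\hat{K}$ would then produce a single subsequence $(n_k)$ along which every such pairing tends to $0$, i.e.\ $\mathcal{D}_{n_kf}\to\mathcal{L}\otimes\mathrm{Haar}_K=\lambda$.

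The core of $(2)\Rightarrow(3)$ is the separate continuity of $*$. Given any $\mathcal{D}_f*$-invariant probability $\nu$ on $\mathcal{M}_\mathcal{L}$ and any continuous $F$, invariance yields $\int F\,d\nu=\int F(\mathcal{D}_{n_kf}*\mu)\,d\nu(\mu)$ along the subsequence from $(2)$. Fixing $\mu$, separate continuity gives $\mathcal{D}_{n_kf}*\mu\to\lambda*\mu=\lambda$ and hence $F(\mathcal{D}_{n_kf}*\mu)\to F(\lambda)$, so dominated convergence forces $\int F\,d\nu=F(\lambda)$ and $\nu=\delta_\lambda$. Conversely $(3)\Rightarrow(2)$ is immediate: the closure $C=\overline{\{\mathcal{D}_{nf}\}}_{n\in\Z}$ is invariant under the continuous map $\mu\mapsto\mathcal{D}_f*\mu$, so Krylov--Bogoljubov yields an invariant probability supported in $C$, which unique ergodicity identifies with $\delta_\lambda$, whence $\lambda\in C$.

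For $(2)\Rightarrow(1)$ I argue contrapositively: if some nontrivial $\chi\in\hat{K}$ makes $(\chi\circ f)_*\mathcal{L}$ carry an atom of mass $c>0$ at $z_0$, then on $A=\{x:\chi(f(x))=z_0\}$ the test function $\mathbf{1}_A(x)\chi(k)$ pairs with $\mathcal{D}_{nf}$ to $c\,z_0^n$ of constant modulus $c$, yet pairs with $\lambda$ to $0$, so $\lambda\notin\overline{\{\mathcal{D}_{nf}\}}$. The main delicacy is step $(2)\Rightarrow(3)$: because $*$ is only separately continuous, one cannot push $\mathcal{D}_{n_kf}*\mu$ to its limit uniformly in $\mu$, so the argument crucially exploits the absorbing identity $\mu*\lambda=\lambda$ to trade joint continuity for pointwise convergence in $\mu$ combined with dominated convergence against the fixed finite measure $\nu$.
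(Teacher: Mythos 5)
Your proposal is correct and follows essentially the same route as the paper: the implication $(1)\Rightarrow(2)$ is the paper's Weyl/Wiener Ces\`aro argument on Fourier coefficients (the paper reproves Wiener's lemma by hand with $I=\mathbf{T}^1$), $(2)\Rightarrow(3)$ is identical (separate continuity, the absorbing identity $\mu*\lambda=\lambda$, dominated convergence, which also yields $\delta_\lambda$ as the unique invariant measure), and your $(3)\Rightarrow(2)\Rightarrow(1)$ matches the paper's $(3)\Rightarrow(1)$, which likewise first extracts a subsequence $\mathcal{D}_{n_if}\to\lambda$ and then derives a contradiction from an atom of $(\chi\circ f)_*\mathcal{L}$. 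The only point to make explicit is that pairing against the discontinuous test function $\mathbf{1}_A(x)\chi(k)$ is legitimate on $\mathcal{M}_\mathcal{L}$ (approximate $\mathbf{1}_A$ in $L^1(\mathcal{L})$ by continuous functions, using that every element of $\mathcal{M}_\mathcal{L}$ projects to $\mathcal{L}$), which plays the role of the paper's observation that weak limits of measures with an atom of fixed mass retain such an atom.
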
 

 The fact that the invariant measure is a Dirac measure implies (see Proposition \ref{densityone}) that there exists a subset of the integers $E\subset \Z$, of natural density $1$, such that for any $\mu \in \mathcal{M}_\mathcal{L}$, 
$$\lim_{n \to \pm \infty, \, n\in E} \;  \mathcal{D}_{nf} * \mu = \lambda.$$

 The question whether $\lambda$ is an attracting point of the dynamic, that is if 
$$ \lim_{n \to \pm \infty} \; \mathcal{D}_{nf} *\mu = \lambda,$$
 or if this fails along some subsequence of zero density, is more delicate, and its answer depends on $f$.\\

 If $I=[0,1]$ equipped with Lebesgue measure, $d=1$, and $f$ is a $C^2$ map with non-vanishing derivative, then $\lambda$ is an attracting point (Proposition \ref{toymodel}), and there is no exceptional subsequence. The $C^2$ regularity condition is not optimal, as Thomine obtained similar results for $C^1$ maps \cite{Thomine}.\\

 However, for an angle map $f$ which is only measurable, the convolution action of $\mathcal{D}_f$ might behave more like an intermittent map with the neutral fixed point $\lambda$. An example of this phenomenon is the following. Again, let $I=[0,1]$ endowed with the Lebesgue measure $\mathcal{L}$. Let $\nu$ be the (probability) Hausdorff measure of dimension $\log2/\log3$ on the usual Cantor set $C$, viewed as a subset of $K=\mathbf{T}^1$ by identifying $0$ and $1$. Define
  $f:[0,1]\to \mathbf{T}^1,$
  by 
  $$f(x)=\inf \{t\geq 0  \, :  \, \nu(0,t)\geq x\} \; \mathrm{mod} \; 1.$$
  Then $f_*\mathcal{L}=\nu$, and $f(I) \subset C$. Alternatively, $f$ can be defined as the reciprocal, outside of dyadic rationals, of the usual devil's staircase, modulo $1$.
  Since $\nu$ does not have any atom, $f$ is an asynchronous angle map. We claim that the sequence $(\mathcal{D}_{3^kf})_{k\geq 1}$ does not intersect a fixed neighborhood of $\lambda$. Indeed, since $C$ is invariant by the multiplication $\times 3$ on the circle, the graph of $3^kf$ is contained in $I\times C$, so the measure $\mathcal{D}_{3^kf}$ is supported on $I\times C$, a proper compact subset of $I\times K$. This forbids $\mathcal{D}_{3^kf}$ to be close to $\lambda$. Still, by Theorem \ref{mainthm}, subsequences like $(\mathcal{D}_{3^kf})_{k\geq 0}$ are scarce, as the points $(\mathcal{D}_{nf})_{n\in \Z}$ tend to $\lambda$ for a subset of $\Z$ of density one.\\
 
\subsection{Main example, and a related ergodicity result} 
 
 As hinted in the motivational paragraph, asynchronous rotations occur naturally in the context of homogeneous dynamics on torus bundle.\\
 
  More precisely, let $\mathbf{G}_0$ be a connected, semisimple algebraic linear group defined over $\Q$,  $G_0=\mathbf{G}_0(\R)$ the group of its $\R$-points and $\Gamma_0=\mathbf{G}_0(\Z)$ its integer points. By the Borel - Harish-Chandra Theorem, $\Gamma_0$ is a lattice in $G_0$. We will consider invariant measures on $G_0/\Gamma_0$ under some elements $a\in G_0$, under the following assumptions.

 \begin{defi}
 An element $a\in G_0-\{e\}$ is said {\em triangularizable with positive eigenvalues} if for every finite dimensional representation $\alpha$ of $\mathbf{G}_0$ defined over $\Q$, $\alpha(a)$ has only real, positive eigenvalues. 
 \end{defi} 
 
 It is the case, for example, when $G_0$ is the real split form of $\mathbf{G}_0$, meaning the real rank equals the complex rank, and if $a$ is the exponential of a non-zero element of a Cartan subalgebra. It also happens when $a$ is unipotent, but this case is less interesting for our purpose, since by Ratner's Theory, $a$-invariant ergodic measures are algebraic. This hypothesis implicitly rules out the case where $G_0$ is the real compact form of $\mathbf{G}_0$, as it cannot contain such element $a$. 
 
 \begin{defi}  
  Let $\mu$ be a probability measure on $G_0/\Gamma_0$, invariant and ergodic under the action of $a$.
  Such a measure is said to be {\em non-concentrated} if for every $H\subset G_0$ closed algebraic, strict subgroup containing $a$, and every $x\in G_0/\Gamma_0$ such that $Hx\Gamma_0$ is closed, then $\mu(Hx\Gamma_0)<1$.
 \end{defi} 
 
  We now consider a fiber bundle over the probability space $(G_0/\Gamma_0,\mu)$, whose fibers are tori.\\
 
 Let $\rho:\mathbf{G}_0\to \mathbf{GL}(V)$ a representation defined over $\Q$ on a finite-dimensional space $V=\R^d$ endowed with the $\Z$-structure $\Z^d$. We will always assume that $d\geq 2$, and that $\rho$ is irreducible over $\Q$. The semidirect product $G=G_0\ltimes_\rho V$ is endowed with the group law
 $$\forall(g,v)\in G,\forall (h,w)\in G, \; (g,v)(h,w)=(gh, v+\rho(g)w).$$
 
 Up to replacing $\Gamma_0$ with a subgroup of finite index in a way such that $\rho(\Gamma)\subset \mathbf{GL}(d,\Z)$, the set $\Gamma=\Gamma_0\ltimes_\rho \Z^d$ is a subgroup of $G$, and the map 
 $$\pi:G/\Gamma \to G_0/\Gamma_0,$$
  is a torus bundle. Indeed, the lattice of $\{e\}\times \R^d$ (for the action of multiplication on the left) stabilizing a point $(x,v)\Gamma$ is precisely $\{e\}\times\rho(x)\Z^d$, thus the fiber of $\pi$ over $x\Gamma_0$ is the torus $\R^d/\rho(x)\Z^d$. It will be convenient to have measurable coordinates where this fiber bundle is a direct product. \\

  Let $I\subset G_0$ be a measurable fundamental domain for the action of $\Gamma_0$, and put $\mathcal{L}$ the restriction to $I$ of the $\Gamma_0$-invariant lift of $\mu$. As previously, we denote by $K$ the $d$-dimensional torus $\mathbf{T}^d=\R^d/\Z^d$. The map
 $$I\times K\to G/\Gamma,$$
 $$(x,\bar{v})_{I\times K}\mapsto (x,\rho(x)v)\Gamma,$$
is a measurable bijection, that we will use as an identification between $I\times K$ and $G/\Gamma$, the subscript $I\times K$ indicating the coordinates we are using. Likewise, we will identify $G_0/\Gamma_0$ with $I$ and $\mu$ with $\mathcal{L}$.\\

 For $\beta \in V$, the action of $(e,\beta)$ by multiplication on $G/\Gamma$ on the left, can be read in the $I\times K$ coordinates as the map
 $$(x,\bar{v})_{I\times K} \mapsto (x,\bar{v}+\rho(x)^{-1}\beta )_{I\times K},$$
 i.e. it is a rotation by an angle map $f_\beta: I\to K$, with
  $$f_\beta(x)=\rho(x)^{-1}\beta \;  \mathrm{mod} \; \Z^d.$$
 We prove:
 
\begin{thm}\label{algasyn} 
 Assume that $a\in G_0$ is triangularizable with positive eigenvalues, that $\mu$ is an $a$-invariant, non-concentrated, ergodic probability on $G_0/\Gamma_0$, that $\rho$ is irreducible over $\Q$, and $dim(V)>1$. Assume also that $\beta \in V-\{0\}$ is an eigenvector for $\rho(a)$. Then the angle map
 \begin{eqnarray*}
  f_\beta :  I& \to & K \\
   x &\mapsto & \rho(x)^{-1}\beta \; \mathrm{mod} \; \Z^d,
 \end{eqnarray*}  
is asynchronous.
\end{thm}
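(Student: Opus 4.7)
The plan is to argue by contradiction. Assume $f_\beta$ is not asynchronous. By the character criterion stated just before the theorem, there exist $n \in \Z^d \setminus \{0\}$ and $c \in \R/\Z$ such that the function $\Psi_n(x) := \langle n, \rho(x)^{-1}\beta\rangle$ on $I$ satisfies $\Psi_n(x) \equiv c \pmod{\Z}$ on a set of positive $\mu$-measure. Since $\beta$ is an eigenvector, we may write $\rho(a)\beta = \lambda\beta$ with $\lambda > 0$ (using that $a$ is triangularizable with positive eigenvalues), and a direct computation gives the functional equation
\[ \Psi_n(ax) = \lambda^{-1}\Psi_n(x). \]
Consequently, the pushforward $\tau_n := (\Psi_n)_*\mu$ is a probability measure on $\R$ invariant under the dilation $t \mapsto \lambda t$.

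If $\lambda \neq 1$, partitioning $\R \setminus \{0\}$ into the $\lambda$-annuli $\{\lambda^{k-1} \leq |t| < \lambda^k\}$ and using that $\tau_n$ is finite forces $\tau_n = \delta_0$, and hence $\Psi_n \equiv 0$ $\mu$-almost everywhere; if $\lambda = 1$, then $\Psi_n$ is $a$-invariant and so $\mu$-a.e.\ constant by ergodicity. In either case, for $\mu$-a.e.\ $x$ the vector $\rho(x)^{-1}\beta$ lies in some fixed affine hyperplane of $V$. I would next exploit the projectivized map $\tilde\Phi: G_0 \to \mathbf{P}(V)$, $g \mapsto [\rho(g)^{-1}\beta]$, which is left $a$-invariant because $\rho(a)^{-1}\beta$ differs from $\beta$ only by the nonzero scalar $\lambda^{-1}$. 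Combining this projective $a$-invariance, the hyperplane restriction, and the $a$-ergodicity of $\mu$, along with Borel density applied to the $\rho^T(\Gamma_0)$-orbit of $n$ (which spans $V^*$ by $\Q$-irreducibility of $\rho$), one shows that $\mu$ must be concentrated on a single orbit of the normalizer $N_{\R\beta} := \{g \in G_0 : \rho(g)\R\beta = \R\beta\}$. This subgroup is closed algebraic and proper: the absence of nontrivial one-dimensional characters on a connected semisimple group, together with the $\Q$-irreducibility of $\rho$ and $\dim V > 1$, forces $\R\beta$ not to be $G_0$-invariant; and $a \in N_{\R\beta}$ because $\rho(a)\R\beta = \R\beta$. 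Applying non-concentration to $N_{\R\beta}$ then produces the desired contradiction.

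The hard part is precisely the step of turning the hyperplane restriction plus projective $a$-invariance into the statement ``$\mu$ is concentrated on a single $N_{\R\beta}$-orbit''. The naive ergodicity argument applied to the quotient $\mathbf{P}(V)/\rho(\Gamma_0)$ is insufficient because this quotient is generally not standard Borel, and an ergodic measure on $G_0/\Gamma_0$ can project non-atomically onto such a non-Hausdorff space. One has to cut down the ambient projective space using the hyperplane condition and carefully exploit the cocycle structure coming from the section $I$, where the arithmetic nature of $\Gamma_0$ together with Borel density plays an essential role. A secondary subtlety is that non-concentration is stated only for \emph{closed} orbits, so one may need to enlarge $N_{\R\beta}$ to a slightly larger algebraic subgroup whose orbit containing the support of $\mu$ is closed before the hypothesis can be invoked.
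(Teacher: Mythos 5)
Your opening reduction contains an error that propagates through the whole sketch. The function $\Psi_n(x)=\langle n,\rho(x)^{-1}\beta\rangle$ lives on the fundamental domain $I$ (equivalently on $G_0$); it is \emph{not} right-$\Gamma_0$-invariant, so it does not descend to $G_0/\Gamma_0$. The $a$-action read in the coordinates $I$ sends $x$ to $ax\gamma(x)$ for some return element $\gamma(x)\in\Gamma_0$, and writing $\rho(a)\beta=\kappa\beta$ one gets
$\Psi_n\bigl(ax\gamma(x)\bigr)=\kappa^{-1}\bigl\langle (\rho(\gamma(x))^{-1})^{T}n,\;\rho(x)^{-1}\beta\bigr\rangle$,
i.e.\ the character index $n$ is moved by the lattice. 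Your functional equation $\Psi_n(ax)=\kappa^{-1}\Psi_n(x)$, and hence the claim that $\tau_n=(\Psi_n)_*\mu$ is invariant under $t\mapsto\kappa t$, therefore fails, and you cannot conclude that $\Psi_n$ is a.e.\ zero or a.e.\ constant, i.e.\ that $\rho(x)^{-1}\beta$ lies in one fixed affine hyperplane for $\mu$-a.e.\ $x$. The paper only obtains such a restriction on a set $E$ of \emph{positive} measure (Lemma \ref{notall}), and the rest of its argument is devoted precisely to controlling the lattice twists you have dropped: it introduces the set of return elements $P_E=\{\gamma\in\Gamma_0 : \mathcal{L}(\cup_{k}a^kE\gamma\cap E)>0\}$ and proves (Theorem \ref{Zdense}) that the smallest algebraic subgroup of $G_0$ containing $P_E$ is $G_0$ itself.

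The second, larger problem is that the step you yourself label ``the hard part'' --- passing from the hyperplane restriction plus projective $a$-invariance to concentration of $\mu$ on a single $N_{\R\beta}$-orbit --- is exactly where all the substance lies, and you do not prove it; you only name the difficulties (non-standard quotient $\mathbf{P}(V)/\rho(\Gamma_0)$, closedness of the orbit). In the paper this content is Theorem \ref{Zdense}, whose proof uses Poincar\'e recurrence to produce return elements, Chevalley's theorem to realize the group they generate as the stabilizer of a line, the positive-eigenvalue hypothesis on $a$ to analyze the Jordan-block asymptotics of $\alpha(a)^{n_k}$, closedness of $H\Gamma_0$ via Borel--Harish-Chandra (which needs $H$ to be a $\Q$-group, guaranteed because $P_E\subset\Gamma_0$), and finally non-concentration. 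Note that your $N_{\R\beta}$ is in general \emph{not} defined over $\Q$ (there is no rationality assumption on $\beta$), so the closedness of $N_{\R\beta}$-orbits required to invoke non-concentration is genuinely out of reach along your route; the paper sidesteps this by only ever applying non-concentration to $\Q$-groups generated by integer return elements, deducing at the end that $\beta$ is a common eigenvector of $\rho(G_0)$, with eigenvalue $1$ by semisimplicity, and then using $\Q$-irreducibility to force $\dim V=1$, the contradiction. As it stands, your proposal is a plan with a false first reduction and an unproved central step, not a proof.
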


 Using the identification of $G/\Gamma$ with $I\times K$, we still denote by $\lambda$ the measure on $G/\Gamma$ such that $\pi_* \lambda=\mu$, whose disintegration along each fiber of $\mu$ are the  Haar measures on each tori. 

 Direct application of Theorem \ref{mainthm} gives:

\begin{cor} \label{coroll}
We assume the same hypotheses as Theorem  \ref{algasyn}. Let $\mathcal{M}_\mu$ be the set of probabilities on $G/\Gamma$ projecting onto $\mu$. The action on $\mathcal{M}_\mu$ induced by the left multiplication by $(e,\beta)$ on $G/\Gamma$ is uniquely ergodic, with invariant measure $\delta_\lambda$.
\end{cor}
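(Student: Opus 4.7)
The proof will be a direct assembly of results already available in the paper. The plan is to transport everything to the coordinates $I\times K$ in which the fiber bundle $\pi:G/\Gamma\to G_0/\Gamma_0$ becomes a direct product, and then check that the dynamical system in question is literally an instance of the convolution dynamics studied in Theorem~\ref{mainthm}.

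First, using the measurable identification $(x,\bar v)_{I\times K}\leftrightarrow(x,\rho(x)v)\Gamma$ described before the corollary, the space $\mathcal{M}_\mu$ of probabilities on $G/\Gamma$ projecting to $\mu$ is in natural bijection with the space $\mathcal{M}_{\mathcal L}$ of probabilities on $I\times K$ projecting to $\mathcal L$ on the first factor (this is just a change of variable in a disintegration along $\pi$). Moreover, under this identification, the measure $\lambda$ corresponds exactly to $\mathcal L\otimes Haar_K$, since the pushforward of $Haar_K$ under the isomorphism $K\to\R^d/\rho(x)\Z^d$ is, by definition, the Haar measure of the fiber.

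Next, the formula recalled just before the statement of Theorem~\ref{algasyn} shows that the left multiplication by $(e,\beta)$ on $G/\Gamma$ reads in $I\times K$ coordinates as the fiber-wise rotation $(x,\bar v)\mapsto(x,\bar v+f_\beta(x))$, where $f_\beta(x)=\rho(x)^{-1}\beta\bmod\Z^d$. The induced map on $\mathcal{M}_{\mathcal L}$ is then precisely the convolution by the graph measure $\mathcal D_{f_\beta}$; this identification between ``fiber-wise rotation by $f$'' and ``convolution by $\mathcal D_f$'' is exactly the one by which the monoid structure on $\mathcal{M}_{\mathcal L}$ was built in Section~\ref{description}.

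Finally, Theorem~\ref{algasyn} guarantees (thanks to our hypotheses on $a$, $\mu$, $\rho$, and $\beta$) that the angle map $f_\beta$ is asynchronous. Applying the implication $(1)\Rightarrow(3)$ of Theorem~\ref{mainthm} to $f=f_\beta$ then gives the unique ergodicity of the convolution action of $\mathcal D_{f_\beta}$ on $\mathcal{M}_{\mathcal L}$, with unique invariant measure $\delta_{\mathcal L\otimes Haar_K}$. Transporting back through the identification yields the unique ergodicity of the $(e,\beta)$-action on $\mathcal M_\mu$ with invariant measure $\delta_\lambda$. There is no genuine obstacle here; the only point requiring some care is the compatibility of the coordinate change with the monoid structure on $\mathcal{M}_{\mathcal L}$, but this is already built into the definitions of Section~\ref{description} and the explicit coordinates fixed before Theorem~\ref{algasyn}.
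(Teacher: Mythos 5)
Your proposal is correct and matches the paper's own treatment: the paper states the corollary as a ``direct application of Theorem~\ref{mainthm}'', relying exactly on the measurable identification of $G/\Gamma$ with $I\times K$ and the fact that left multiplication by $(e,\beta)$ becomes rotation by the angle map $f_\beta$, which Theorem~\ref{algasyn} shows to be asynchronous. Your only addition is to spell out the transport of $\mathcal{M}_\mu$, $\lambda$, and the convolution structure through the coordinate change, which the paper leaves implicit.
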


 Now choose any $\alpha \in V$. The action of left multiplication by $(a,\alpha)$ on $G/\Gamma$ admits the action of $a$ on $G_0/\Gamma_0$ as a factor. A natural question is if the measure $\lambda$, which is invariant, is ergodic with respect to this action. \\
 
 If $\beta$ is any eigenvector of $\rho(a)$, multiplication by $(e,\beta)$ is in some sense moving in some part of the stable, unstable or neutral direction  (depending on the eigenvalue) of the action of $(a,\alpha)$. Theorem \ref{algasyn} and Hopf's argument allows us to prove the following ergodicity result:

\begin{thm} \label{ergodd}
Assume that $a\in G_0$ is triangularizable with positive eigenvalues, that $\mu$ is an $a$-invariant, non-concentrated, ergodic probability on $G_0/\Gamma_0$, and that $\rho$ is irreducible over $\Q$, of dimension $>1$. Choose $\alpha \in V$, then the action by left multiplication by $(a,\alpha)$ on $G/\Gamma$ is ergodic with respect to the invariant measure  $\lambda$. \\
 If we assume moreover that $\rho(a)$ is not unipotent, then the action of $(a,\alpha)$ on $(G/\Gamma,\lambda)$ is weakly mixing if and only if the action of $a$ on $(G_0/\Gamma_0,\mu)$ is weakly mixing, and the same property holds for strong mixing.
\end{thm}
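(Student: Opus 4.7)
The plan is to run Hopf's argument along the strong stable horosphere of $(a,\alpha)$ and use Theorem \ref{algasyn} (via Fourier analysis along the fiber) to extract base-level information. All computations take place in the $I\times K$ coordinates, in which the action reads $(x,\bar v)\mapsto(T_ax,\rho(\gamma_0(x))\bar v+\rho(T_ax)^{-1}\alpha)$, where $ax=(T_ax)\gamma_0(x)$ is the decomposition coming from the fundamental domain $I$. Any $F\in L^2(G/\Gamma,\lambda)$ is Fourier-decomposed along the fibers as
\[
  F(x,\bar v)\;=\;\sum_{n\in\Z^d}c_n(x)\,e^{2\pi i\langle n,v\rangle},\qquad c_n\in L^2(I,\mathcal L).
\]

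\textbf{Ergodicity.} Let $F$ be $(a,\alpha)$-invariant. Assume first that $\rho(a)$ is not unipotent, and pick an eigenvector $\beta\in V\setminus\{0\}$ of $\rho(a)$ with eigenvalue $\kappa\in(0,1)$ (after possibly replacing $a$ by $a^{-1}$). Since $(a,\alpha)^n(e,\beta)(a,\alpha)^{-n}=(e,\kappa^n\beta)\to(e,0)$, the one-parameter group $\{(e,t\beta):t\in\R\}$ lies in the strong stable horosphere, and Hopf's argument, applied to uniformly continuous approximations of $F$, yields $F\circ(e,\beta)=F$ $\lambda$-a.e. In $I\times K$ coordinates, $(e,\beta)$ is the rotation by $f_\beta(x)=\rho(x)^{-1}\beta\bmod\Z^d$, so the invariance reads $c_n(x)\bigl(e^{2\pi i\chi_n(f_\beta(x))}-1\bigr)=0$ for every $n$ and $\mathcal L$-a.e.\ $x$, where $\chi_n$ is the character indexed by $n$. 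Theorem \ref{algasyn} provides asynchronicity of $f_\beta$: for $n\neq 0$, $(\chi_n\circ f_\beta)_*\mathcal L$ has no atom at $0$, forcing $c_n\equiv 0$. Thus $F(x,\bar v)=c_0(x)$ descends to an $a$-invariant function on $G_0/\Gamma_0$, constant by ergodicity of $\mu$. The complementary case $\rho(a)$ unipotent reduces, essentially, to $a$ itself unipotent modulo $\ker\rho$, where ergodicity of $(a,\alpha)$ on $\lambda$ follows from classical Mautner--Moore/Ratner theory for unipotent actions.

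\textbf{Mixing.} Since $\pi$ makes $(G_0/\Gamma_0,\mu,a)$ a measurable factor of $(G/\Gamma,\lambda,(a,\alpha))$, weak or strong mixing of the extension transfers to the factor. For the converses, assume $\rho(a)$ non-unipotent and fix $\beta$ with $0<\kappa<1$. If $a$ is weakly mixing and $F$ is an eigenfunction, $F\circ(a,\alpha)=\zeta F$ with $|\zeta|=1$, the eigenfunction version of Hopf's argument --- using $F((a,\alpha)^n(e,\beta)z)=\zeta^n F((e,\kappa^n\beta)(a,\alpha)^n z)$, the contraction $\kappa^n\beta\to 0$, and continuous approximation --- gives $F\circ(e,\beta)=F$ a.e. The Fourier step then makes $F$ depend only on $x$, yielding an $L^2(\mu)$-eigenfunction of $a$ with the same eigenvalue $\zeta$; weak mixing of $a$ forces $\zeta=1$ and $F$ constant.

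For strong mixing, decompose $F,G\in L^2(\lambda)$ as $F=F_0+F_1$ and $G=G_0+G_1$ with fiber averages $F_0,G_0$ and zero-fiber-mean remainders $F_1,G_1$. The inner product $\langle F\circ(a,\alpha)^n,G\rangle_\lambda$ splits into $\langle F_0\circ a^n,G_0\rangle_\mu$, which tends to $\int F\,d\lambda\cdot\overline{\int G\,d\lambda}$ by strong mixing of $a$, two cross terms vanishing identically (since $F_0\circ(a,\alpha)^n$ remains a function of $x$ only, orthogonal to the zero-fiber-mean $G_1$, and symmetrically), and $\langle F_1\circ(a,\alpha)^n,G_1\rangle_\lambda$. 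The $k$-th fiber Fourier coefficient of $F_1\circ(a,\alpha)^n$ equals $c_{R^{(n)}(x)k}(T_a^n x)\,e^{2\pi i\Phi_k^{(n)}(x)}$ with $R(x)=\rho(\gamma_0(x))^{-T}$; non-unipotence of $\rho(a)$ and $\Q$-irreducibility of $\rho$ imply that for every $k\neq 0$, the cocycle orbit $\{R^{(n)}(x)k\}_n$ escapes to infinity in $\Z^d$ for $\mathcal L$-a.e.\ $x$, and combined with $\ell^2$-summability of $(c_m)_m$ and a dominated-convergence interchange of sum and limit, the last term vanishes.

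The main obstacle lies in the strong-mixing piece: one must produce a quantitative escape-to-infinity for the cocycle orbits $R^{(n)}(x)k$ on the character lattice --- reducing essentially to comparing the growth of $\rho(\gamma_0^{(n)}(x))$ with that of $\rho(a)^n$ --- and an $\ell^1$-type bound uniform in $k$ that justifies the interchange of sum and limit. The Hopf argument itself, particularly its eigenfunction variant, also requires a careful continuous approximation to be rigorously deployed in the $L^2$ setting.
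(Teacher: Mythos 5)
Your Hopf-plus-Fourier treatment of the non-unipotent case is essentially the paper's argument (the paper phrases the fiber step as ergodicity of the rotation by $\rho(x)^{-1}\beta$ on almost every fiber rather than killing Fourier modes $c_n$, and it gets the $W^{ss}$-invariance of bounded eigenfunctions from Coud\`ene's version of the Hopf argument, which also settles the ``careful approximation'' issue you flag). But there are two genuine gaps. First, the case $\rho(a)$ unipotent, which the ergodicity statement must cover, cannot be dispatched by ``Mautner--Moore/Ratner theory'': $\mu$ is an arbitrary non-concentrated $a$-invariant ergodic measure, typically non-algebraic, and $\lambda$ is then not a Haar measure, so Mautner/Moore (which live in $L^2$ of Haar measure) and Ratner's measure classification simply do not apply -- the introduction of the paper makes exactly this point. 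Note also that $\rho(a)$ unipotent does not make $a$ unipotent, only its image. The paper's actual argument here is different and does not use Hopf at all: when the eigenvalue of $\beta$ is $1$, the element $(e,\beta)$ commutes with $(a,\alpha)$, so the measure $m$ giving the ergodic decomposition of $\lambda$ is supported on $\mathcal{M}_\mu$ and is invariant under the convolution action of $(e,\beta)$; unique ergodicity of that action (Corollary \ref{coroll}, i.e.\ Theorem \ref{mainthm} applied to the asynchronous $f_\beta$) forces $m=\delta_\lambda$. Without some such argument your ergodicity claim is unproved in the unipotent case.

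Second, your strong-mixing argument is incomplete, and you say so yourself: the needed statements -- that the dual cocycle orbits $R^{(n)}(x)k$ escape to infinity (or at least that $\mathcal{L}\{x:\,R^{(n)}(x)k=k'\}\to 0$ for all fixed $k\neq 0$, $k'$) and the uniform bound justifying the interchange of sum and limit -- are precisely the hard content, and they do not follow directly from asynchronicity or $\Q$-irreducibility as stated. The paper bypasses this cocycle analysis entirely: by a theorem of Coud\`ene generalizing Babillot, every weak $L^2$-limit $g$ of $f\circ(a,\alpha)^k$ is $W^{ss}$-invariant, hence $(e,\beta)$-invariant, hence (by ergodicity of the fiber rotations, i.e.\ asynchronicity) of the form $G\circ\pi$; testing against functions $H\circ\pi$ and using strong mixing of $a$ applied to the fiber average $F$ of $f$ identifies $G$ with the constant $\int f\,d\lambda$, and weak compactness concludes. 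If you want to keep your Fourier route you must actually prove the escape/equidistribution property of the return cocycle $\gamma_0^{(n)}(x)$ acting on $\Z^d$, which is a substantive missing lemma; otherwise you should replace that part by the invariance-of-weak-limits argument.
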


\subsection{Plan of the paper} 

 In Section \ref{description}, we collect some facts about the topology of $\mathcal{M}_\mathcal{L}$. \\
 
 In Section \ref{Asynchronous}, after dealing with the toy example where $f$ is a $C^2$ map, we prove Theorem \ref{mainthm}. \\
 
 In Section \ref{smallest}, we prove that in the algebraic setting, the smallest algebraic subgroup of $G_0$ containing the elements $\gamma \in \Gamma_0$ induced by Poincar\'e recurrence of tha $a$-action, is $G_0$ itself.
This result (Theorem \ref{Zdense}), which is the main ingredient of the proof of Theorem \ref{algasyn}, relies crucially on the non-concentration of $\mu$.\\

 In Section \ref{application}, we prove the asynchronicity of the rotation obtained by the construction in homogeneous dynamics (Theorem \ref{algasyn}).\\
 
 In Section \ref{ergodproof}, we prove the ergodicity of the extension of the action of $a$ on $G_0/\Gamma_0$ (Theorem \ref{ergodd}). This mainly relies on the following easy observation: if an angle map $f:I\to K$ is asynchronous, then for almost every $x$, the action of translation by $f(x)$ on the torus $K$ is (uniquely) ergodic.
 
\subsection{Acknowledgements}
 
 I would like to thank to Jean-Pierre Conze, Serge Cantat, S\'ebastien Gou\"ezel, Barbara Schapira and Damien Thomine for their feedback and comments on the subject.
 
\section{The space $\mathcal{M}_\mathcal{L}$}
\label{description}

\subsection{Topology of $\mathcal{M}_\mathcal{L}$} 

 We recall that $(I,\mathcal{B}(I))$ be a standard measurable space means that is $I$ can be endowed with a complete, separable distance $d_I$, such that $\mathcal{B}(I)$ is the $\sigma$-algebra of its Borel sets. The facts about standard probability spaces we will use are summarized in 
 \cite[Chapter 1.1]{MR1450400}. Choosing such a distance on $I$ defines a topology on the space of probability measures on $I\times K$, and hence on $\mathcal{M}_\mathcal{L}$.\\
 
 Although the weak-* topology of the space of measures on $I\times K$ depends in a strong way on the choice of topology on $I$, it turns out that:
 
\begin{lem} \label{topologie}
The topology induced on $\mathcal{M}_\mathcal{L}$ does not depends on the choice of topology on $I$. 
\end{lem}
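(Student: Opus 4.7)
The plan is to exhibit a characterization of convergence in $\mathcal{M}_\mathcal{L}$ that is intrinsic to the measurable structure $(I,\mathcal{B}(I),\mathcal{L})$ and hence makes no reference to any particular topology on $I$. Specifically, I will show that for a sequence $(\mu_n)$ and $\mu$ in $\mathcal{M}_\mathcal{L}$, the following are equivalent: (i) $\mu_n\to\mu$ in the weak-$*$ topology for some (equivalently, any) complete separable metric on $I$ whose Borel $\sigma$-algebra is $\mathcal{B}(I)$; (ii) for every $A\in\mathcal{B}(I)$ and every $g\in C(K)$, $\int\mathbf{1}_A(x)g(y)\,d\mu_n(x,y)\to\int\mathbf{1}_A(x)g(y)\,d\mu(x,y)$. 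Since (ii) is formulated entirely in terms of $\mathcal{B}(I)$ and $\mathcal{L}$, the lemma follows at once.

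Fix a Polish topology $\tau$ on $I$ realizing $\mathcal{B}(I)$. For (i)$\Rightarrow$(ii), given $A\in\mathcal{B}(I)$, $g\in C(K)$ and $\epsilon>0$, I would use the Radon regularity of $\mathcal{L}$ on $(I,\tau)$ to sandwich $A$ between a $\tau$-closed $F$ and a $\tau$-open $U$ with $\mathcal{L}(U\setminus F)<\epsilon$, then apply Urysohn to obtain $\phi\in C_b(I,\tau)$ with $\mathbf{1}_F\le\phi\le\mathbf{1}_U$. The tensor $\phi\otimes g$ lies in $C_b(I\times K)$, so weak-$*$ convergence yields $\int\phi\otimes g\,d\mu_n\to\int\phi\otimes g\,d\mu$, while the bound $|\int(\phi-\mathbf{1}_A)g\,d\nu|\le\|g\|_\infty\mathcal{L}(U\setminus F)<\epsilon\|g\|_\infty$, which holds for every $\nu\in\mathcal{M}_\mathcal{L}$ because the first marginal of $\nu$ is $\mathcal{L}$, closes the argument by triangle inequality upon letting $\epsilon\to 0$.

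For (ii)$\Rightarrow$(i), the main task is to approximate an arbitrary $h\in C_b(I\times K,\tau)$ uniformly in $\nu\in\mathcal{M}_\mathcal{L}$ by a finite sum $\tilde h(x,y)=\sum_i\phi_i(x)g_i(y)$ with $g_i\in C(K)$. I would use tightness of $\mathcal{L}$ on $(I,\tau)$ to pick a $\tau$-compact $C\subset I$ with $\mathcal{L}(I\setminus C)<\epsilon$. Since $h$ is uniformly continuous on the compact set $C\times K$, a sufficiently fine finite $\tau$-open cover $(V_i)$ of $C$, a continuous partition of unity $(\phi_i)$ subordinate to $(V_i)$ (extended by zero outside a neighbourhood of $C$), and sample points $x_i\in V_i\cap C$ produce $\tilde h(x,y)=\sum_i\phi_i(x)h(x_i,y)$ with $|\tilde h - h|\le\epsilon$ on $C\times K$ and $|\tilde h - h|\le 2\|h\|_\infty$ elsewhere. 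The projection constraint on $\nu$ then gives $|\int(\tilde h-h)\,d\nu|\le\epsilon+2\|h\|_\infty\epsilon$ uniformly in $\nu\in\mathcal{M}_\mathcal{L}$. On the other hand, $\int\tilde h\,d\mu_n\to\int\tilde h\,d\mu$ follows from (ii): each summand $\int\phi_i(x)g_i(y)\,d\nu$ is, via the bound $|\int f(x)g(y)\,d\nu|\le\|g\|_\infty\|f\|_{L^1(\mathcal{L})}$, the limit of the corresponding quantities in which $\phi_i$ is replaced by a simple approximant to which (ii) applies directly.

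The delicate point is the construction of $\tilde h$ in the second implication: the approximation error must be uniform over the entire family $\mathcal{M}_\mathcal{L}$, and this is exactly what the prescribed first marginal provides. Without the constraint that all members of $\mathcal{M}_\mathcal{L}$ project to $\mathcal{L}$, the conclusion would genuinely fail, since two different Polish realizations of $I$ need not give comparable weak-$*$ topologies on the full space of probabilities on $I\times K$.
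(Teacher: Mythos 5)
Your argument is correct, and it takes a different route from the paper. The paper fixes a measure isomorphism $\varphi$ between two Polish realizations and proves directly that the induced map $\tilde\varphi_*$ on the spaces of measures is a homeomorphism, using Lusin's theorem to make $\varphi$ continuous off a set of small $\mathcal{L}$-measure and Tietze--Urysohn to extend $F\circ\tilde\varphi$ to a continuous test function; you instead exhibit a purely measurable criterion for convergence in $\mathcal{M}_\mathcal{L}$ (testing against $\mathbf{1}_A\otimes g$ with $A\in\mathcal{B}(I)$, $g\in C(K)$) and show it is equivalent to weak-$*$ convergence for any Polish topology realizing $\mathcal{B}(I)$, via regularity of $\mathcal{L}$ in one direction and tightness plus a partition-of-unity approximation in the other. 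Both proofs hinge on exactly the same mechanism -- the fixed first marginal makes errors concentrated on $(I\setminus C)\times K$, with $\mathcal{L}(I\setminus C)$ small, uniformly negligible over all of $\mathcal{M}_\mathcal{L}$ -- but yours has the added benefit of an intrinsic description of the topology on $\mathcal{M}_\mathcal{L}$, which is a reusable statement, at the cost of a slightly longer approximation argument. One point you should make explicit: your criterion characterizes convergent \emph{sequences}, so to conclude that the topologies themselves coincide you need to know that, for each Polish realization, the weak-$*$ topology on probability measures on $I\times K$ (hence on $\mathcal{M}_\mathcal{L}$) is metrizable; this is standard (e.g. via the L\'evy--Prokhorov metric on a Polish space), and the paper's subsequent discussion uses the same fact, but as written your ``the lemma follows at once'' silently relies on it. Your closing remark is also apt: on the full space of probabilities on $I\times K$ the topology genuinely does depend on the topology chosen for $I$, so the restriction to measures with first marginal $\mathcal{L}$ is what makes the statement true.
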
 
\begin{proof} 
 Let $I_1,I_2$ two complete, separable metric space endowed with probabilities $\mathcal{L}_i$, with a map $\varphi:I_1\to I_2$ an isomorphism such that $\varphi_*\mathcal{L}_1=\mathcal{L}_2$.
 The topologies induced on measures on $I_i\times K$ are generated by the open sets:
 $$\mathcal{U}^{i}(F,\delta,\mu)=\left\{\nu \,:\, \left| \int_{I_i\times K} Fd\mu-\int_{I_i\times K} Fd\nu\right| <\delta \right\},$$
 where $F:I_i\times K\to \C$ is continuous with compact support for the relevant topology on $I_i\times K$.
 
 Denote by $\tilde{\varphi}$ the map $I_1\times K\to I_2\times K$, $\tilde{\varphi}(x,y)=(\varphi(x),y)$.  To show that $\tilde{\varphi}_*:\mathcal{M}_{\mathcal{L}_1} \to \mathcal{M}_{\mathcal{L}_2}$ is a homeomorphism, it is sufficient by symmetry to show its continuity.\\ 
 
 We fix a neighborhood $\mathcal{U}^{2}(F,\epsilon,\tilde{\varphi}_*\mu)$, and wish to show that its preimage contains some neighborhood of the initial point $\mathcal{U}^{1}(G,\delta,\tilde{\varphi}_*\mu)$, for some $G,\delta$.
 
  The map $\varphi$ from $I_1$ to $I_2$ is measurable. By Lusin's Theorem, for every $\delta>0$, there is a compact set $J\subset I_1$, such that $\mathcal{L}_1(J)>1-\delta$, on which $\varphi$ is continuous. Let $F:I_2\times K\to \C$ be continuous, by Tietze-Urysohn's Theorem, there exists a continuous function $G_\delta:I_1\times K \to \C$ which extends the continuous map $F\circ \tilde{\varphi}:J\times K\to \C$. Moreover, since $F$ is bounded, $G_\delta$ can be chosen such that
 $\|G_\delta\|_\infty=\|F\|_\infty$. If $\nu \in \mathcal{M}_{\mathcal{L}_1}$,
$$ \left| \int_{I_2\times K} F d\tilde{\varphi}_*\mu-\int_{I_2\times K} F d\tilde{\varphi}_*\nu\right| \leq 2\delta\| F\|_\infty+  \left| \int_{J\times K} G_\delta d\mu-\int_{J\times K} G_\delta d\nu\right|,$$
because $\mu((I_2-\varphi J)\times K)=\mathcal{L}_2(I_2-\varphi J)=\mathcal{L}_1(I_1-J)<\delta$ and the same holds for $\nu$.
If we choose $\delta>0$ such that $2\delta\| F\|_\infty<\epsilon/2$, we have:
$$ \left| \int_{I_2\times K} F d\tilde{\varphi}_*\mu-\int_{I_2\times K} F d\tilde{\varphi}_*\nu\right| \leq \epsilon/2 +  \left| \int_{I_1\times K} G_\delta d\mu-\int_{I_1\times K} G_\delta d\nu\right|,$$
and therefore, provided that $\delta<\epsilon/2$,
$$\mathcal{M}_{\mathcal{L}_1}\cap \mathcal{U}^{1}(G_\delta,\delta,\mu) \subset \tilde{\varphi}_*^{-1} (\mathcal{U}^{2}(F,\epsilon,\tilde{\varphi}_*\mu) \cap
\mathcal{M}_{\mathcal{L}_2}).$$
As any neighborhood of $\varphi_*\mu$ contains finite intersections of sets of the form  $\mathcal{U}^{2}(F,\epsilon,\varphi_*\mu)$, this implies that $\tilde{\varphi}_*:\mathcal{M}_{\mathcal{L}_1}\to \mathcal{M}_{\mathcal{L}_2}$ is continuous, as required.
\end{proof}

 A corollary of this discussion is that we can assume for example that $I=[0,1]$ and $\mathcal{L}$ is the Lebesgue measure on this interval, endowed with its usual topology. Since in this case, the set of probability measures on $I\times K$ is a compact, separable metric space, it follows that $\mathcal{M}_\mathcal{L}$ is also compact, separable and metric.

\subsection{Graphs and measures}

 For a measurable map $g:I\to K$, we define the {\em graph measure $\mathcal{D}_g,$ of $g$} as the direct image of $\mathcal{L}$ by the map $x\in I\mapsto (x,g(x)) \in I\times K$. Two measurables maps $I\to K$ define the same graph measure if and only if they are equal $\mathcal{L}$-almost everywhere.\\

 Let $\mathcal{G}$ be the set of graph measures, this is a subset of $\mathcal{M}_\mathcal{L}$.
  
\subsection{Disintegration along $\mathcal{L}$}  
  
 Any $\mu \in  \mathcal{M}_\mathcal{L}$ can be disintegrated as a family of measures $(\mu^x)_{x\in I}$, such that for any
   continuous test-function with compact support $F:I\times K\to \C$, 

$$\mu(F)=\int_I \left( \int_K F(x,y)d\mu^x(y)\right)d\mathcal{L}(x).$$
 
 Moreover, the map $x\mapsto \mu^x$ is measurable, and uniquely defined modulo zero sets. See e.g. \cite[Th. 5.8]{MR603625}.\\

\subsection{Convolution product}

 For $\mu_1,\mu_2$ two measures in $\mathcal{M}_\mathcal{L}$, we define the fiberwise convolution product of $\mu_1,\mu_2$ by

 $$\mu_1*\mu_2(F)=\int_{I} \left( \int_{K^2} F(x,y+z) d\mu_1^x(y) d\mu_2^x(z) \right) d\mathcal{L}(x),$$
where $F:I\times K\to \C$ a continuous test-function with compact support. Equivalently, $(\mu*\nu)^x$ is the usual convolution product of $\mu^x$ and $\nu^x$.\\

 The following Lemma, whose proof is left to the reader, summarizes elementary properties of this fiberwise convolution product. 
\begin{lem} \label{lemmerecap} The following holds.
\begin{enumerate} 
\item $$\forall (f,g)\in (K^I)^2, \, \mathcal{D}_{f+g}=\mathcal{D}_{f}*\mathcal{D}_{g}.$$
\item $\mathcal{D}_0$ is the neutral element of the commutative monoid $(\mathcal{M}_\mathcal{L},*)$, where $0:I\to K$ is the map almost everywhere zero.
\item The set of invertible elements for $*$ is $\mathcal{G}$, the set of graph measures.
\end{enumerate}
\end{lem}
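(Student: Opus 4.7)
The plan is to reduce each of the three claims to a statement about fiberwise convolutions on $K$ via disintegration, and then to invoke elementary properties of convolution of probability measures on the compact abelian group $K$.

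First I would observe that the disintegration of a graph measure is simply $\mathcal{D}_f^x = \delta_{f(x)}$ for $\mathcal{L}$-almost every $x$, directly from the definition of $\mathcal{D}_f$ as the pushforward of $\mathcal{L}$ by $x\mapsto(x,f(x))$. Combined with the definition of the fiberwise convolution, claim (1) is then a direct computation: $(\mathcal{D}_f * \mathcal{D}_g)^x = \delta_{f(x)} * \delta_{g(x)} = \delta_{f(x)+g(x)} = \mathcal{D}_{f+g}^x$, and both sides integrate against the same $\mathcal{L}$. Claim (2) follows in the same vein from $\delta_0 * \mu^x = \mu^x$ for every $x$.

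The real content is claim (3). One inclusion is immediate: every $\mathcal{D}_f \in \mathcal{G}$ is invertible, with inverse $\mathcal{D}_{-f}$, by (1) and (2). For the converse, I would start from a relation $\mu * \nu = \mathcal{D}_0$ and use uniqueness of disintegrations to deduce that $\mu^x * \nu^x = \delta_0$ for $\mathcal{L}$-almost every $x$. The key step is then to prove that in the convolution monoid of Borel probability measures on the compact abelian group $K$, the invertible elements are exactly the Dirac masses. I would establish this by Fourier analysis on $K$: the identity $\widehat{\mu^x}(\chi)\,\widehat{\nu^x}(\chi) = 1$ for every character $\chi \in \hat{K}$, combined with the bound $|\widehat{\mu^x}(\chi)|\leq 1$, forces $|\widehat{\mu^x}(\chi)|=1$; the Cauchy--Schwarz equality case then says $\chi$ is $\mu^x$-almost surely constant, and applying this simultaneously for all $\chi$ forces $\mathrm{supp}(\mu^x)$ to be a singleton.

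It remains to promote the pointwise selection defined by $\mu^x = \delta_{f(x)}$ into a measurable map $f:I\to K$, so that $\mu = \mathcal{D}_f \in \mathcal{G}$. This is routine: $x\mapsto\mu^x$ is measurable into the space of Borel probabilities on $K$, the set of Dirac masses is a closed subset homeomorphic to $K$ via $\delta_y\mapsto y$, and $f$ is then measurable as a composition. I do not anticipate any genuine obstacle; the only non-trivial input in the whole argument is the Fourier characterization of invertible convolution factors on $K$, with everything else being bookkeeping with disintegrations.
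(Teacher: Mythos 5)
Your proposal is correct, and in fact the paper gives no argument at all here (the Lemma's proof is explicitly left to the reader), so there is nothing to compare it against; your reduction to fiberwise statements via uniqueness of disintegration, the Fourier argument that an invertible element of the convolution monoid of probabilities on $K$ must be a Dirac mass (modulus-one Fourier--Stieltjes coefficients force each character to be a.e.\ constant, and characters separate points, so the support is a singleton), and the measurable-selection step identifying $x\mapsto \mu^x=\delta_{f(x)}$ with a measurable $f:I\to K$ are all sound and complete. A small remark: for the key step in (3) one can avoid Fourier analysis altogether by using supports, since $\mathrm{supp}(\mu^x*\nu^x)$ is the closure of $\mathrm{supp}(\mu^x)+\mathrm{supp}(\nu^x)$; if $\mu^x*\nu^x=\delta_0$ and $\mathrm{supp}(\mu^x)$ contained two distinct points $y\neq z$, then picking any $w\in\mathrm{supp}(\nu^x)$ would force $y+w=z+w=0$, a contradiction. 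Either route works; yours has the mild advantage of generalizing verbatim to any compact abelian group, while the support argument is the more elementary bookkeeping.
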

 
  Remark that, if $f:[0,1]\to \mathbf{T}^1$, $f(x)=x \; mod  \;1$, then one can check by hand (or see e.g. Proposition \ref{toymodel}) that $\mathcal{D}_{nf}$ tends to $\lambda$ as $n\to \pm \infty$, but  
 $$\mathcal{D}_{nf}*\mathcal{D}_{-nf}=\mathcal{D}_{0}\neq \lambda*\lambda,$$
 so the fiberwise convolution product is not continuous. However, we have:
 
 \begin{lem} For any $\nu\in \mathcal{M}_\mathcal{L}$, the convolution map
 $$*\nu: \mathcal{M}_\mathcal{L} \to \mathcal{M}_\mathcal{L},$$
 $$\mu \mapsto \mu * \nu,$$
 is continuous.
 \end{lem}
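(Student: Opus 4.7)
The plan is to reduce continuity of $*\,\nu$ to ordinary weak-$*$ convergence against a single continuous test function, by pre-integrating out the $\nu$-variable. Fix a continuous $F : I \times K \to \C$ and define the would-be pushforward function
$$G(x,y) = \int_K F(x, y+z)\, d\nu^x(z),$$
so that $(\mu * \nu)(F) = \int_{I \times K} G(x,y)\, d\mu(x,y)$ for every $\mu \in \mathcal{M}_\mathcal{L}$. If $G$ were continuous on $I \times K$, continuity of $*\,\nu$ would follow immediately from the definition of the weak-$*$ topology; the whole difficulty is that $x \mapsto \nu^x$ is only measurable, so $G$ is only measurable in $x$.

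To overcome this, I would repeat the Lusin--Tietze trick already used in the proof of Lemma \ref{topologie}. Fixing $\epsilon > 0$, Lusin's theorem applied to the measurable map $x \mapsto \nu^x$ (with values in the metrizable compact space of probability measures on $K$) produces, for any $\delta > 0$, a compact set $J \subset I$ with $\mathcal{L}(I \setminus J) < \delta$ on which this map is continuous. Combined with the uniform continuity of $F$ on the compact space $I \times K$, this yields continuity of $G$ on $J \times K$: for $(x,y)$ close to $(x_0,y_0)$ in $J \times K$, one splits
$$G(x,y) - G(x_0,y_0) = \int_K \bigl[F(x,y+z) - F(x_0,y_0+z)\bigr]\, d\nu^x(z) + \int_K F(x_0,y_0+z)\, d(\nu^x - \nu^{x_0})(z),$$
and both terms go to zero. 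Tietze--Urysohn then extends $G|_{J \times K}$ to a continuous $\tilde G$ on $I \times K$ with $\|\tilde G\|_\infty = \|G\|_\infty \leq \|F\|_\infty$.

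For any $\mu' \in \mathcal{M}_\mathcal{L}$, since $G = \tilde G$ on $J \times K$ and $\mu'((I \setminus J) \times K) = \mathcal{L}(I \setminus J) < \delta$,
$$\bigl|(\mu' * \nu)(F) - \mu'(\tilde G)\bigr| = \left|\int_{(I \setminus J) \times K} (G - \tilde G)\, d\mu'\right| \leq 2\delta\, \|F\|_\infty,$$
and the same bound holds for $\mu$ in place of $\mu'$. Since $\tilde G$ is continuous on $I \times K$, the weak-$*$ convergence $\mu' \to \mu$ gives $|\mu'(\tilde G) - \mu(\tilde G)| < \delta$ for $\mu'$ in a suitable neighborhood of $\mu$, so that
$$\bigl|(\mu' * \nu)(F) - (\mu * \nu)(F)\bigr| \leq 4\delta\, \|F\|_\infty + \delta.$$
Choosing $\delta$ small enough in terms of $\epsilon$ and $\|F\|_\infty$ concludes the proof.

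The main obstacle is the measurable (not continuous) dependence of the disintegration $x \mapsto \nu^x$; everything else is routine once we replace this map by a continuous one off a set of small $\mathcal{L}$-measure, exactly as in the proof of Lemma \ref{topologie}.
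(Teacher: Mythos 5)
Your proposal is correct and follows essentially the same route as the paper's proof: apply Lusin's theorem to the measurable map $x\mapsto\nu^x$ to get continuity of $H(x,y)=\int_K F(x,y+z)\,d\nu^x(z)$ on a set $E\times K$ of measure at least $1-\delta$, extend by Tietze--Urysohn, and compare integrals up to an error of order $\delta\|F\|_\infty$. The only (harmless) difference is that you spell out the two-term estimate showing continuity of $H$ on $J\times K$, which the paper leaves implicit.
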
 
 \begin{proof} It is sufficient to check that the preimage by $*\nu$ of any set of the form $\mathcal{U}(F,\epsilon,\mu*\nu)$, for any $F$, $\epsilon$, $\mu$, contains a set of the form $\mathcal{U}(G,\delta,\mu)$ for some $G$ and some $\delta>0$. Let $\mathcal{U}(F,\epsilon,\mu*\nu)$ be such a neighborhood of $\mu*\nu$, and let $\delta$ such that $\delta(4\|F\|_\infty+1)<1$. As the map $x\mapsto \nu^x$ is measurable, again by Lusin's Theorem, it is continuous on a set $E$ of measure $1-\delta$.
 Define
 $$H(x,y)=\int_K F(x,y+z)d\nu^x(z).$$
It follows from the continuity of $F$ and the continuity of $x\mapsto \nu^x$ that this is a continuous map on $E\times K$, and moreover bounded by $\|F\|_\infty$. 
 Thus it can be extended to a bounded continuous map, say $G$, on $I\times K$, still bounded by $\|F\|_\infty$. Notice that for any $\eta \in \mathcal{M}_\mathcal{L}$,
 $$\int_{E\times K} Fd(\eta*\nu)=\int_E \int_K \left(\int_K F(x,y+z)d\nu^x(z) \right)d\eta^x(y)d\mathcal{L}(x)=\int_{E\times K} G d\eta.$$
   Let $\eta \in \mathcal{U}(G,\delta,\mu)$, then 
\begin{eqnarray*}
 \left| \int_{I\times K} Fd(\eta*\nu)-\int_{I\times K} Fd(\mu*\nu) \right|  
  & \leq  & 2\delta \|F\|_\infty \\
  &  & + \left| \int_{E\times K} Fd(\eta*\nu)-\int_{E\times K} Fd(\mu*\nu) \right| \\
 & \leq & 2\delta \|F\|_\infty +  \left| \int_{E\times K} G d\eta- \int_{E\times K} Gd\mu \right|\\
 & \leq & 4\delta \|F\|_\infty + \delta.\\
\end{eqnarray*} 
 By the choice of $\delta$,this implies that $*\nu \left( \mathcal{U}(G,\delta,\mu) \right) \subset \mathcal{U}(F,\epsilon,\mu*\nu)$, as announced.
 \end{proof}

\section{Asynchronous maps}  
\label{Asynchronous}   
   
\subsection{A simple example}   
   
 As stated in the introduction, if $f$ has enough regularity properties, it turns out that $\lambda$ is the limit point of the dynamic of $*\mathcal{D}_f$ on $\mathcal{M}_\mathcal{L}$. This result will not be used in the sequel.
 \begin{prop} \label{toymodel}
 Assume $f:[0,1]\to \R/\Z$ is a $C^2$-map, such that $f'$ does not vanish. Then for all $\mu \in \mathcal{M}_\mathcal{L}$, $\mathcal{D}_{kf}*\mu \to \lambda$ when $k\to \pm \infty$.
 \end{prop}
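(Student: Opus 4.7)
The plan is to verify weak-$*$ convergence by testing against a dense family of continuous functions on $I \times K = [0,1] \times (\R/\Z)$, which reduces the problem to a classical Riemann--Lebesgue estimate in the base variable. By Stone--Weierstrass, the complex linear span of products $F(x,y) = \phi(x)\,\chi_n(y)$, with $\phi \in C([0,1])$ and $\chi_n(y) = e^{2\pi i n y}$ ($n \in \Z$), is dense in $C([0,1]\times K)$; since all the measures involved are probabilities, testing on this dense family is enough to obtain the weak-$*$ limit.

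For the trivial character ($n=0$), both $\int F\, d(\mathcal{D}_{kf}*\mu)$ and $\int F\, d\lambda$ equal $\int_0^1 \phi\, d\mathcal{L}$, so there is nothing to prove. Fix $n \neq 0$. Disintegrating $\mu$ as $(\mu^x)_{x\in I}$ and noting that $(\mathcal{D}_{kf}*\mu)^x = \delta_{kf(x)} * \mu^x$, a direct computation yields
$$\int F\, d(\mathcal{D}_{kf}*\mu) \;=\; \int_0^1 \phi(x)\, c_n(x)\, e^{2\pi i n k f(x)}\, dx,$$
where $c_n(x) := \int_K e^{2\pi i n y}\, d\mu^x(y)$ is Borel measurable in $x$ with $|c_n| \leq 1$. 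Since $\int F\, d\lambda = 0$ whenever $n \neq 0$, the proposition reduces to showing that $\int_0^1 g(x)\, e^{2\pi i n k f(x)}\, dx \to 0$ as $|k|\to\infty$, for an arbitrary $g \in L^\infty([0,1])$ (to be specialised to $g = \phi\, c_n$).

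Because $f'$ is continuous and does not vanish on the compact interval $[0,1]$, it has constant sign and is bounded away from $0$. Lifting $f$ to a strictly monotone $C^2$ map $\tilde f:[0,1] \to J\subset \R$ and performing the change of variable $u = \tilde f(x)$ transforms the oscillatory integral into $\int_J h(u)\, e^{2\pi i n k u}\, du$, with $h(u) = g(\tilde f^{-1}(u)) / |\tilde f'(\tilde f^{-1}(u))|$ bounded on the bounded interval $J$, hence in $L^1(J)$. The Riemann--Lebesgue lemma then delivers the desired decay. The main subtlety is that the disintegration $x \mapsto \mu^x$ has no regularity beyond measurability, so the coefficient $c_n$ is only $L^\infty$; this is precisely why Riemann--Lebesgue is the right tool rather than integration by parts (which would additionally give a quantitative rate). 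Note that the argument actually uses only $C^1$ regularity of $f$ together with non-vanishing of $f'$, consistent with Thomine's sharper statement recalled in the introduction.
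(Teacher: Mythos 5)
Your proof is correct, and it takes a noticeably different route from the paper's. The paper first reduces to showing $\mathcal{D}_{kf}\to\lambda$ by invoking the one-sided continuity of convolution (the lemma that $\eta\mapsto\eta*\mu$ is continuous), and then kills the nontrivial Fourier--Stieltjes coefficients $\int_0^1 e^{2i\pi(nx+mkf(x))}\,dx$ by an integration by parts against $1/f'$; this is exactly where the $C^2$ hypothesis enters, and it yields a quantitative rate $O(1/k)$. You instead keep $\mu$ inside the computation via its disintegration, which leaves an oscillatory integral $\int_0^1 g(x)e^{2\pi i nk f(x)}\,dx$ whose amplitude $g=\phi\,c_n$ is only bounded measurable, so integration by parts is unavailable; your substitution $u=\tilde f(x)$ followed by the Riemann--Lebesgue lemma is the right replacement, and all supporting steps (density of the span of $\phi(x)\chi_n(y)$ by Stone--Weierstrass plus uniform boundedness of the measures, measurability of $x\mapsto\mu^x$, the identity $(\mathcal{D}_{kf}*\mu)^x=\delta_{kf(x)}*\mu^x$) are available in the paper's framework. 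What your route buys: no appeal to the separate-continuity lemma, and only $C^1$ regularity with nonvanishing derivative is actually used, consistent with Thomine's sharper statement recalled in the introduction. What it gives up: the explicit $O(1/k)$ decay of the Fourier coefficients that the paper's integration by parts provides.
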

 \begin{proof}
 By continuity of $*\mu$, it is sufficient to check that $\mathcal{D}_{kf}$ tends to $\lambda$ as $k$ tends to infinity.
 To do so, compute the Fourier-Stieltjes coefficients   
 $$\hat{\mathcal{D}}_{kf}(n,m)=\int_{[0,1]} e^{2i\pi(nx+mkf(x))}dx.$$
 If $m=0$, this coefficient is $1$ or $0$, depending on whether $n=0$ or not. If $m\neq 0$, we can write
 $$\hat{\mathcal{D}}_{kf}(n,m)=\int_{[0,1]} \frac{e^{2i\pi nx}}{2i\pi mk \,f'(x)} \frac{\partial}{\partial x}\left( e^{2i\pi mk  f(x)}\right) dx,$$
 and integration by parts immediately shows that $\hat{\mathcal{D}}_{kf}(n,m)=O(\frac1{k})$ when $k\to \pm \infty$ with $n,m$ fixed.
 \end{proof}
   
\subsection{Proof of $1) \Rightarrow 2)$}   
   
   Assume that $f$ is asynchronous. We may, and will, assume that $I=\mathbf{T}^1$ endowed with its Haar probability measure $\mathcal{L}$.
   The space $I\times K$ is then a $(d+1)$-dimensional torus, and to check that some $\mathcal{D}_{nf}$ is close to $\lambda$, it is sufficient to show that
   for a finite set of the non-trivial Fourier-Stieljes coefficients of $\mathcal{D}_{nf}$ are close to zero.
   
 \begin{lem} For any nontrivial character $\chi_0$ of $I\times K= \mathbf{T}^{d+1}$, we have
$$\lim_{N\to +\infty} \frac1N \sum_{n=0}^{N-1}  \left| \widehat{\mathcal{D}_{nf}}(\chi_0) \right|^2=0.$$
 \end{lem}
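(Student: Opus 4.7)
The plan is to use the standard Fourier-analytic trick that turns Ces\`aro averages of squared Fourier coefficients into an integral against the "diagonal" indicator, and then invoke asynchronicity to kill the diagonal.

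First I would factor the character: since $I = \mathbf{T}^1$ and $K = \mathbf{T}^d$, any character of $\mathbf{T}^{d+1}$ splits as $\chi_0(x,y) = \psi(x)\chi(y)$ with $\psi \in \widehat{I}$ and $\chi \in \widehat{K}$. Then
$$\widehat{\mathcal{D}_{nf}}(\chi_0) = \int_I \psi(x)\,\chi(f(x))^{n}\,d\mathcal{L}(x),$$
so expanding the modulus squared and using Fubini,
$$\frac{1}{N}\sum_{n=0}^{N-1}\bigl|\widehat{\mathcal{D}_{nf}}(\chi_0)\bigr|^{2} = \int_{I\times I}\psi(x)\overline{\psi(y)}\cdot\frac{1}{N}\sum_{n=0}^{N-1}\Bigl(\chi(f(x))\overline{\chi(f(y))}\Bigr)^{n}\,d\mathcal{L}(x)\,d\mathcal{L}(y).$$
The inner geometric average is bounded by $1$ in modulus and converges pointwise to $\mathbf{1}\{\chi(f(x)) = \chi(f(y))\}$. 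Dominated convergence then gives
$$\lim_{N\to\infty}\frac{1}{N}\sum_{n=0}^{N-1}\bigl|\widehat{\mathcal{D}_{nf}}(\chi_0)\bigr|^{2} = \int\!\!\int_{\{\chi\circ f(x)=\chi\circ f(y)\}}\psi(x)\overline{\psi(y)}\,d\mathcal{L}(x)\,d\mathcal{L}(y).$$

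I would then split into two cases. If $\chi$ is the trivial character of $K$, then $\psi$ must be nontrivial (because $\chi_0$ is), the constraint $\chi\circ f(x) = \chi\circ f(y)$ is automatic, and the integral factors as $\bigl|\int_I \psi\,d\mathcal{L}\bigr|^{2} = 0$. If $\chi$ is nontrivial, then asynchronicity of $f$ says precisely that $(\chi\circ f)_*\mathcal{L}$ has no atoms, so by disintegrating $\mathcal{L}\otimes\mathcal{L}$ over the map $(x,y)\mapsto(\chi\circ f(x),\chi\circ f(y))$,
$$(\mathcal{L}\otimes\mathcal{L})\bigl(\{(x,y) : \chi\circ f(x) = \chi\circ f(y)\}\bigr) = \int_{\mathbf{T}^{1}}(\chi\circ f)_*\mathcal{L}(\{t\})\,d(\chi\circ f)_*\mathcal{L}(t) = 0,$$
so the integral again vanishes.

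The only genuine input is the asynchronicity hypothesis, used in the second case to ensure the diagonal $\{\chi\circ f(x) = \chi\circ f(y)\}$ is $\mathcal{L}\otimes\mathcal{L}$-null. Everything else is formal manipulation with dominated convergence. There is no real obstacle; the main thing to be careful about is the character $\chi$ being possibly trivial on $K$, which is why the case split above is needed.
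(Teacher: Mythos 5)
Your proposal is correct and follows essentially the same route as the paper: factor $\chi_0=\psi\otimes\chi$, expand $\bigl|\widehat{\mathcal{D}_{nf}}(\chi_0)\bigr|^2$ into a double integral over $I\times I$, average the resulting geometric sums, and conclude by dominated convergence, with asynchronicity making the set $\{\chi\circ f(x)=\chi\circ f(y)\}$ null and orthogonality of $\psi$ handling the case $\chi=1$. The only (cosmetic) difference is that you pass the limit inside first and then kill the diagonal, whereas the paper observes the a.e.\ pointwise vanishing of the Ces\`aro averages before applying dominated convergence.
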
   
   \begin{proof}
   For an integer $k\in{\bf Z}$, let $e_k$ be the character of $\mathbf{T}^1$, $e_k(x)=e^{2i\pi kx}$. Any character $\chi_0$ of $I\times K=\mathbf{T}^{n+1}$ can be written uniquely as
   a product $\chi_0(x,y)=e_k(x)\chi(y)$, for some $k \in {\bf Z}$ and every $x\in I$, $y\in K$. We have
   
   $$\frac1N \sum_{n=0}^{N-1}  \left| \widehat{\mathcal{D}_{nf}}(\chi_0) \right|^2= \int_{I^2} e^{2i\pi k(x-x')} 
   \left( \frac1N \sum_{n=0}^{N-1} \chi(f(x)-f(x'))^n
   \right)
   d\mathcal{L}^2(x,x').$$
   
   If $\chi=1$, then $k\neq 0$ since $\chi_0\neq 1$. In this case, we have
$$\frac1N \sum_{n=0}^{N-1}  \left| \widehat{\mathcal{D}_{nf}}(\chi_0) \right|^2= 0,$$
 so the statement is trivial.\\
   
   If $\chi\neq 1$, then by assumption $\chi(f(x)-f(x'))\neq 1$ for almost every $(x,x')$, so  
   $$\lim_{N\to +\infty} \frac1N \sum_{n=0}^{N-1} \chi(f(x)-f(x'))^n=0.$$
   
   Therefore, Lebesgue's dominated convergence Theorem applies and we obtain the desired result.
 \end{proof}
 
 Let $F\subset \hat{\mathbf{T}}^{d+1}-\{1\}$ be a finite subset of nontrivial characters, and $\epsilon>0$. By the previous Lemma, we have 
   $$\lim_{N\to +\infty} \frac1N \sum_{n=0}^{N-1}  \left( \sum_{\chi_0\in F} \left| \widehat{\mathcal{D}_{nf}}(\chi_0) \right|^2 \right)=0,$$
   so there exists $n\geq 0$ such that for all $\chi_0 \in F$, $\left| \widehat{\mathcal{D}_{nf}}(\chi_0) \right|\leq \epsilon$, meaning that $\mathcal{D}_{nf}$ is close to $\lambda$.

 \subsection{Proof of $2) \Rightarrow 3)$}
 Let us check that $\lambda \in \overline{\{\mathcal{D}_{nf} \}}_{n\in \Z}$ implies that $*\mathcal{D}_f$ is uniquely ergodic. Let $(n_i)_{i\geq 1}$ be a sequence such that $\mathcal{D}_{n_if}$ converges weakly to $\lambda$ when $i\to+\infty$. 
 Let $\mu \in \mathcal{M}_\mathcal{L}$, then by one-sided continuity of convolution,
 $$\mu*\mathcal{D}_{n_if}\rightarrow_{i\to+\infty} \mu*\lambda=\lambda.$$
 Let $m$ be any invariant measure on $\mathcal{M}_\mathcal{L}$, $F:\mathcal{M}_\mathcal{L}\to \R$ be a continuous function. Then
 $$\int_{\mathcal{M}_\mathcal{L}} F(\mu)dm(\mu)=\int_{\mathcal{M}_\mathcal{L}} F(\mu*\mathcal{D}_{n_if})dm(\mu),$$
 by invariance of $m$. The Lebesgue dominated convergence Theorem implies
  $$\int_{\mathcal{M}_\mathcal{L}} F(\mu)dm(\mu) \to \int_{\mathcal{M}_\mathcal{L}} F(\lambda)dm(\mu)=F(\lambda),$$
  which means that $m$ is the Dirac measure at $\lambda$, as required.

\subsection{Proof of $3) \Rightarrow 1)$}
\label{converse}

 Assume that the convolution action of $\mathcal{D}_f$ is uniquely ergodic. As $\lambda$ is a fixed point, the invariant measure is necessarily $\delta_\lambda$, and thus as the invariant measure is a Dirac mass, there exist a subsequence $n_i\to +\infty$ such that $\mathcal{D}_{n_i f}$ tends to $\lambda$ as $i\to +\infty$. \\
 
 Let $\chi$ be any non-trivial character of $K$. Assume that $(\chi\circ f)_*\mathcal{L}$ has an atom. In this case, there would be a set $E\subset I$ of positive measure on which $\chi\circ f$ is a constant, say $c$. Thus  
$\chi\circ (n_i f)$ is a constant on $E$, namely $c^{n_i}$, and $(\chi \circ (n_i f))_*\mathcal{L}$ will have an atom of mass $\mathcal{L}(E)$. Note that
$$(\chi \circ (n_i f))_*\mathcal{L}=(\chi \circ \pi_K)_* \mathcal{D}_{n_i f}.$$
But $(\chi \circ \pi_K)_* \mathcal{D}_{n_i f}$ tends to $(\chi \circ \pi_K)_*\lambda$, namely the Lebesgue measure on $\mathbf{T}^1$, which  cannot be a limit of measures having a atom of fixed mass. This is a  contradiction.

\subsection{Sets of natural density one}
 
 \begin{prop} \label{densityone}
  Assume $f:I\to K$ is asynchronous. Then there exists a set $E\subset \Z$ of full natural density  such that for all $\mu \in \mathcal{M}_\mathcal{L}$,
 $$\lim_{n \to \pm \infty, \, n\in E} \;  \mathcal{D}_{nf}* \mu = \lambda.$$
 \end{prop}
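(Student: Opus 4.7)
The plan is to combine the unique ergodicity granted by Theorem \ref{mainthm} with the classical Koopman--von Neumann lemma to extract a density-one set of integers along which $\mathcal{D}_{nf}$ itself tends to $\lambda$, and then to upgrade this convergence to arbitrary $\mu\in\mathcal{M}_\mathcal{L}$ using one-sided continuity of the convolution.

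First I would note that the self-map $T:\mathcal{M}_\mathcal{L}\to\mathcal{M}_\mathcal{L}$ defined by $T(\nu)=\nu*\mathcal{D}_f$ is continuous on the compact metric space $\mathcal{M}_\mathcal{L}$, and by Theorem \ref{mainthm} it is uniquely ergodic with invariant probability $\delta_\lambda$. The standard equivalence between unique ergodicity on a compact metric space and uniform convergence of Birkhoff averages then gives, for every continuous $F:\mathcal{M}_\mathcal{L}\to\R$,
$$\frac{1}{N}\sum_{n=0}^{N-1} F(T^n\nu)\;\longrightarrow\; F(\lambda)$$
uniformly in $\nu$. Fixing any compatible metric $d$ on $\mathcal{M}_\mathcal{L}$ and applying this with $F(\xi)=d(\xi,\lambda)$ and $\nu=\mathcal{D}_0$ yields
$$\frac{1}{N}\sum_{n=0}^{N-1} d(\mathcal{D}_{nf},\lambda)\;\longrightarrow\; 0.$$

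I would then invoke the Koopman--von Neumann lemma (a bounded non-negative sequence with vanishing Cesàro average converges to zero along a subset of $\N$ of natural density one) to extract a set $E^+\subset\N$ of density one such that $\mathcal{D}_{nf}\to\lambda$ as $n\to+\infty$ through $E^+$. For any $\mu\in\mathcal{M}_\mathcal{L}$, continuity of the map $\nu\mapsto\nu*\mu$ then gives $\mathcal{D}_{nf}*\mu\to\lambda*\mu=\lambda$ along $E^+$, with the same set $E^+$ valid uniformly in $\mu$.

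For $n\to-\infty$ I would repeat the argument with $-f$ in place of $f$: the map $-f$ is asynchronous, since for each non-trivial character $\chi$ of $K$ the pushforward $(\chi\circ(-f))_*\mathcal{L}$ is the image of $(\chi\circ f)_*\mathcal{L}$ under complex conjugation and therefore has the same atoms. This produces a density-one subset $E^-$ of the non-positive integers along which $\mathcal{D}_{nf}=\mathcal{D}_{|n|(-f)}\to\lambda$. Taking $E=E^+\cup E^-$ yields a subset of $\Z$ of full natural density satisfying the conclusion. I do not anticipate any serious obstacle; the only mildly delicate point is the passage from uniform Cesàro convergence to density-one pointwise convergence, which is precisely the content of the Koopman--von Neumann lemma.
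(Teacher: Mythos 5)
Your proof is correct and follows essentially the same route as the paper: both use the unique ergodicity from Theorem \ref{mainthm} to force the empirical measures of the orbit of $\mathcal{D}_0$ under convolution by $\mathcal{D}_f$ to converge to $\delta_\lambda$, extract a density-one set along which $\mathcal{D}_{nf}\to\lambda$ (the paper's nested-neighborhood construction is precisely a proof of the Koopman--von Neumann extraction you invoke), and then conclude for arbitrary $\mu$ by one-sided continuity of the convolution. The only cosmetic difference is that the paper treats both signs at once by averaging over symmetric windows $|k|\le N$, whereas you handle $n\to-\infty$ by passing to $-f$, which is indeed asynchronous as you note.
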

 \begin{proof}
  We consider the measure on $\mathcal{M}_\mathcal{L}$,
  $$\nu_N=\frac1{2N+1}\sum_{|k|\leq N} \delta_{\mathcal{D}_{kf}}.$$
  As any weak limit of $\nu_N$ is $\mathcal{D}_f*$-invariant and $\mathcal{M}_\mathcal{L}$ is compact, by unique ergodicity of $(\mathcal{M}_\mathcal{L},\mathcal{D}_f*)$, $\nu_N$ converges to $\delta_{\lambda}$ when $N$ goes to $+\infty$. This implies that for any neighborhood $\mathcal{U}$ of $\lambda$, the proportion of $\{\mathcal{D}_{kf} \}_{|k|\leq N}$ outside $\mathcal{U}$ goes to zero as $N\to +\infty$.\\
  Let $(\mathcal{U}_m)_{m\geq 1}$ be a decreasing basis of neigborhood of $\lambda$, and
  $$E_m=\{ k\in \Z \; : \; \mathcal{D}_{kf}\in \mathcal{U}_m \}.$$
  Let $N_m$ be an integer such that for all $N\geq N_m$,
  $$\mathbf{P}_N(E_m)\geq 1-\frac{1}{m},$$
  where $\mathbf{P}_N$ is the uniform probability on $[-N,N]$. We can modify the sequence $(N_m)_{m\geq 0}$ to be strictly increasing, and choose $N_1=-1$.
  Let $E$ be the subset 
  $$E=\bigcup_{m\geq 1} E_m\cap \{ k\in \Z \, : \, N_m<|k|\leq N_{m+1}\}.$$
  Notice that since the sets $E_m$ are decreasing with $m$, if $n\leq  N_{m+1}$,
  $$E \cap  [-n,n]\subset E_m.$$
  Thus, for $n$ such that $N_m<n\leq N_{m+1}$, we have
  $$\mathbf{P}_n(E)\geq 1-\frac{1}{m}.$$
  This proves that $E$ is a set of natural density one. By construction, we have 
  $$\lim_{n \to \pm \infty, \, n\in E} \; \mathcal{D}_{nf} = \lambda.$$
  By continuity of the convolution with $\mu$, the latter limit holds for the sequence $\mathcal{D}_{nf}*\mu$ with the same set $E$.
 \end{proof}

\section{On the smallest algebraic group containing return elements}
\label{smallest}

 The following Theorem, which will be a crucial ingredient of the proof of Theorem \ref{algasyn}, might be of independent interest.

 \begin{thm} \label{Zdense}
  Let $G_0$ be the group of real points of an algebraic group $\mathbf{G}_0$ defined over $\Q$, without nontrivial $\Q$-characters, $\Gamma_0=\mathbf{G}_0(\Z)$ be its integer points, $a\in G_0$ be triangularizable with positive eigenvalues, and $\mu$ an $a$-invariant measure
 on $G_0/\Gamma_0$.
 We assume that the measure $\mu$ is ergodic and non-concentrated. Let $I\subset G_0$ be a fundamental domain for $\Gamma_0$, and denote by $\mathcal{L}$ the lift to $I$ of $\mu$. Let $E\subset I$ be a subset of positive $\mathcal{L}$-measure. Define
 $$P_E=\left\{\gamma \in \Gamma_0 \, : \, \mathcal{L}\left( \cup_{k\in \Z} a^k E\gamma\cap E \right)>0 \right\},$$
 the set of elements of $\Gamma_0$ associated to return times in $E$. Then the smallest algebraic subgroup of $G_0$ containing $P_E$ is $G_0$.
 \end{thm}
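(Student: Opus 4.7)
The plan is to argue by contradiction. Suppose $H$, the smallest algebraic subgroup of $G_0$ containing $P_E$, is a proper subgroup of $G_0$. My aim is to exhibit a closed algebraic orbit in $G_0/\Gamma_0$ of full $\mu$-measure, which would contradict the non-concentration hypothesis.

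The first step is to extract the dynamical content of $P_E$ via a first-return cocycle. For $\tilde x\in I$ and $n\in\Z$, write $a^n\tilde x=\tilde x_n\,\phi_n(\tilde x)$ with $\tilde x_n\in I$ and $\phi_n(\tilde x)\in\Gamma_0$; then $\phi_n$ is a $\Z$-cocycle over the $a$-action on $I\simeq G_0/\Gamma_0$. By the definition of $P_E$, whenever $\tilde x\in E$ and $\tilde x_n\in E$ we have $\phi_n(\tilde x)\in P_E$, and the cocycle identity $\phi_{n+m}(\tilde x)=\phi_m(\tilde x_n)\,\phi_n(\tilde x)$ shows that every cocycle value at a return time from $E$ to $E$ belongs to $\langle P_E\rangle\subseteq H$. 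Consequently, for $\mathcal{L}$-a.e.\ $\tilde x\in E$ the $G_0$-lift $a^n\tilde x$ lies in $E\cdot H$ at every return time of $\tilde x\Gamma_0$ to $E\Gamma_0/\Gamma_0$; by Poincar\'e recurrence and Birkhoff's ergodic theorem, these return times form a set of positive density.

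Next, let $H''$ denote the smallest algebraic subgroup of $G_0$ containing both $a$ and $P_E$; then $H\subseteq H''$ and $a\in H''$, so every left $H''$-orbit $H''x\Gamma_0\subseteq G_0/\Gamma_0$ is $a$-invariant. Decomposing $\mu$ measurably along the $H''$-orbit partition and using $a$-ergodicity forces $\mu$ to be concentrated on a single orbit $H''x_0\Gamma_0$; the previous step guarantees that this orbit has positive $\mu$-mass intersection with $E\Gamma_0/\Gamma_0$, so $\mu(H''x_0\Gamma_0)=1$. Since $P_E\subset\mathbf{G}_0(\Q)$, and assuming $H''$ is defined over $\Q$ and inherits the no-$\Q$-character property of $\mathbf{G}_0$, Borel--Harish-Chandra implies that $H''\Gamma_0/\Gamma_0$, and hence $H''x_0\Gamma_0$, is closed. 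Non-concentration then forces $H''=G_0$.

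The main obstacle is bridging from $H''=G_0$ back to $H=G_0$: in principle $H$ could still be strictly smaller than $H''$. To close this gap I would first show that $H$ is $a$-normalized, using the minimality of $H$: the conjugate $aHa^{-1}$ is again an algebraic subgroup, and by examining the cocycle identity on the shifted orbit $a\tilde x$ one should verify that $aHa^{-1}$ also contains $\langle P_E\rangle$, forcing $aHa^{-1}=H$. Once $H$ is normal in $H''=G_0$ and defined over $\Q$, the no-$\Q$-character hypothesis on $\mathbf{G}_0$, combined with the triangularizable-with-positive-eigenvalues structure of $a$ acting on the algebraic quotient variety $G_0/H$, should rule out $H$ being a proper normal subgroup. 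I expect the $a$-normalization of $H$ and the verification of the $\Q$-structure / no-character property of $H''$ to carry the bulk of the technical work; it is in these two steps that the full strength of the non-concentration hypothesis on $\mu$ and the algebraic hypotheses on $a$ and $\mathbf{G}_0$ should enter.
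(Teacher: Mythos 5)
Your plan has two genuine gaps, and the second is exactly where the heart of the theorem lies. First, the auxiliary group $H''$ (smallest algebraic subgroup containing $a$ and $P_E$) is in general \emph{not} defined over $\Q$: the element $a$ is an arbitrary real point, so the Borel--Harish-Chandra argument you invoke does not apply to $H''$, and neither the closedness of $H''x_0\Gamma_0$ nor the measurability of the $H''$-orbit partition (which you need for ``ergodicity forces concentration on a single orbit'') is justified. There is also a left/right mismatch: the cocycle step only says that at return times $\tilde x_n \in a^n\tilde x\, H$, i.e.\ the return elements act by \emph{right} multiplication, whereas $H''x_0\Gamma_0$ is a \emph{left} orbit; positive $\mu$-mass of a single left $H''$-orbit does not follow from this, so even the conclusion $H''=G_0$ is not established -- and in any case it is essentially vacuous for the theorem, since it says nothing about $H$.

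Second, the bridge from $H''$ back to $H$, which you yourself flag as carrying ``the bulk of the technical work,'' is missing, and the route you sketch is unlikely to close it: minimality of $H$ does not give $aHa^{-1}\supset P_E$ (the cocycle identity relates conjugation by points $x\in E$, not by $a$), and even if $H$ were a proper \emph{normal} $\Q$-subgroup, the no-$\Q$-character hypothesis alone cannot exclude it (think of a $\Q$-simple factor of a product); some use of non-concentration is needed at precisely this point, but your argument has already spent it on $H''$. The paper's proof goes differently and this is the step you are missing: realize $H$ (which \emph{is} defined over $\Q$, being generated by integer points) as the stabilizer of a line $D$ via Chevalley's theorem; by Poincar\'e recurrence, for a.e.\ $x\in E$ there are $n_k\to\infty$, $\gamma_k\in P_E\subset H$ with $a^{n_k}x\gamma_k\to x$, hence $\alpha(a)^{n_k}\alpha(x)D\to\alpha(x)D$; the hypothesis that $a$ is triangularizable with positive eigenvalues (via the Jordan--Chevalley decomposition of $\alpha(a)$) then forces $\alpha(x)D$ into an eigenspace of $\alpha(a)$, i.e.\ $a\in xHx^{-1}$. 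Combined with closedness of $H\Gamma_0$ (Borel--Harish-Chandra for the $\Q$-group $H$) and ergodicity, this gives $\mu(xH\Gamma_0)=\mu\left((xHx^{-1})x\Gamma_0\right)=1$, and non-concentration applied to the conjugate $xHx^{-1}\ni a$ yields $H=G_0$. Note that in your proposal the positive-eigenvalue hypothesis on $a$ is never actually used, which is a strong sign the key mechanism is absent.
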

 
  To prove this, let $H$ be the smallest algebraic subgroup of $G_0$ containing $P_E$. Our aim is to show that $H=G_0$. This will be done in the following sequence of Lemmata.
  
\subsection{Closure of $H\Gamma_0$}  
  
  \begin{lem} \label{Hisclosed}
   The set $H\Gamma_0$ is closed
  \end{lem}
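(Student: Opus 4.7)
The plan is to derive closedness of $H\Gamma_0$ via the classical two-step path coming from Borel--Harish-Chandra: if $\mathbf{H}^\circ$ admits no non-trivial $\Q$-character, then $H\cap\Gamma_0$ is a lattice in $H$; and whenever a closed subgroup $H\subset G_0$ has $H\cap\Gamma_0$ as a lattice, the set $H\Gamma_0$ is closed in $G_0$ (this is a standard consequence of Dani-type closed-orbit results). So the core task will be to verify the first condition for our particular $H$. Observe at the outset that $H$ is defined over $\Q$, being the Zariski closure of $P_E\subset\mathbf{G}_0(\Q)$, a Galois-invariant subset; hence so is the identity component $\mathbf{H}^\circ$.

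I would first check that $P_E\cap H^\circ$ is Zariski-dense in $\mathbf{H}^\circ$. The neutral element $e$ belongs to $P_E$ (take $k=0$ and $\gamma=e$ in the defining union, which then contains $E$, of positive measure), so $P_E\cap H^\circ\ni e$. Moreover, $\mathbf{H}^\circ$ is one of the irreducible components of $\mathbf{H}$, and since $\mathbf{H}=\overline{P_E}^{\mathrm{Zar}}$ by construction, each irreducible component must meet $P_E$ in a Zariski-dense subset; otherwise a strictly smaller algebraic subvariety of $\mathbf{G}_0$ would contain $P_E$, contradicting the minimality of $\mathbf{H}$.

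Next, let $\chi:\mathbf{H}^\circ\to\mathbf{G}_m$ be any $\Q$-character. A standard integrality fact from arithmetic group theory (the image $\chi(\mathbf{H}^\circ(\Z))$ is commensurable with $\mathbf{G}_m(\Z)=\{\pm 1\}$, hence finite, and therefore contained in $\{\pm 1\}$) yields $\chi(P_E\cap H^\circ)\subset\{\pm1\}$. Since $\chi$ is a morphism of algebraic varieties, it sends the Zariski closure $\overline{P_E\cap H^\circ}^{\mathrm{Zar}}=\mathbf{H}^\circ$ into $\overline{\{\pm1\}}^{\mathrm{Zar}}=\{\pm 1\}$. But $\chi(\mathbf{H}^\circ)$, as the image of an irreducible variety, is Zariski-connected, so it reduces to a single point; that point must be $\chi(e)=1$. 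Hence $\chi\equiv 1$, so $\mathbf{H}^\circ$ has no non-trivial $\Q$-character, and the two-step reduction above yields that $H\Gamma_0$ is closed in $G_0$.

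The delicate point in this strategy is the integrality claim $\chi(\mathbf{H}^\circ(\Z))\subset\{\pm1\}$ for an arbitrary $\Q$-character, which hinges on the compatibility between the $\Z$-structure on $\mathbf{H}^\circ$ induced from $\mathbf{G}_0$ and $\Q$-morphisms into $\mathbf{G}_m$; once this is in place, the rest of the argument is purely structural.
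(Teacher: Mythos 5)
Your overall skeleton (no non-trivial $\Q$-characters $\Rightarrow$ $H\cap\Gamma_0$ is a lattice in $H$ by Borel--Harish-Chandra $\Rightarrow$ $H\Gamma_0$ closed) is the same reduction the paper uses, and that part is fine. The genuine gap is the Zariski-density step. You assert that $\mathbf{H}=\overline{P_E}^{\mathrm{Zar}}$ ``by construction'' and deduce that every irreducible component, in particular $\mathbf{H}^\circ$, meets $P_E$ in a Zariski-dense set. But $H$ is defined as the smallest algebraic \emph{subgroup} containing $P_E$, i.e.\ the Zariski closure of the subgroup \emph{generated} by $P_E$; the set $P_E$ itself contains $e$ and is symmetric, but it is not closed under products, and its Zariski closure need not be a subgroup at all (it can be of much smaller dimension than $H$). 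Consequently your ``otherwise a strictly smaller algebraic subvariety would contain $P_E$'' argument proves nothing: minimality of $\mathbf{H}$ is only among algebraic \emph{subgroups} containing $P_E$, so a smaller subvariety containing $P_E$ is no contradiction. Without density of $P_E\cap H^\circ$, knowing $\chi(\mathbf{H}^\circ(\Z))\subset\{\pm1\}$ does not force $\chi$ to be trivial: integral points of a $\Q$-group need not be Zariski-dense (e.g.\ $\mathbf{G}_m$ has $\mathbf{G}_m(\Z)=\{\pm1\}$ and a non-trivial $\Q$-character), so this is exactly the point where the argument breaks.

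There are two standard repairs, and the paper's proof illustrates the first. (1) Avoid density altogether: for a $\Q$-character $c$ of $H$ and $\gamma\in P_E\subset\mathbf{G}_0(\Z)$, the values $c(\gamma^n)=c(\gamma)^n$, $n\in\Z$, are rationals with uniformly bounded denominators, which forces $c(\gamma)=\pm1$; hence $P_E\subset \ker(c^2)$, which \emph{is} an algebraic $\Q$-subgroup, so minimality of $H$ gives $H=\ker(c^2)$ and $c^2\equiv 1$; on the connected component this kills $c$, and Borel--Harish-Chandra applies. (2) Alternatively, replace $P_E\cap H^\circ$ by $\Lambda\cap H^\circ$, where $\Lambda$ is the subgroup generated by $P_E$: since $\overline{\Lambda}^{\mathrm{Zar}}=\mathbf{H}$ and $\mathbf{H}^\circ$ has finite index, the usual irreducibility argument shows $\Lambda\cap H^\circ$ is Zariski-dense in $\mathbf{H}^\circ$; it is a \emph{subgroup} of integral points, so its image under a $\Q$-character is a bounded-denominator subgroup of $\Q^\times$, hence $\{\pm1\}$, and then your connectedness argument correctly concludes $\chi\equiv1$. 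As written, though, the key density assertion is unjustified, so the proposal does not yet prove the lemma.
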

  \begin{proof}
 Notice that $H$ is defined over $\Q$, since $P_E$ consists of integer points. We claim that the the non-trivial $\Q$-characters of $H$ are of order $2$. Indeed, if $c$ is such a character defined over $\Q$, the image by $c$ of the subgroup generated by $P_E \cap H^0\subset \Gamma_0=\mathbf{G}(\Z)$ consists of rational with bounded denominators, and is a multiplicative subgroup, so $c(P_E)\subset \{-1,+1\}$. Therefore, $P_E$ is contained in $Ker(c^2)$, an algebraic group defined over $\Q$. By definition of $H$, $H\subset Ker(c^2)$, so $H=Ker(c^2)$ as required. In particular, $H/(H\cap \Gamma_0)$ is of finite volume, by the Theorem of Borel and Harish-Chandra \cite[Corollaire 13.2]{MR0244260}.
By \cite[Proposition 8.1]{MR0244260}, this also implies that $H^0\Gamma_0$ is a closed subset of $G_0/\Gamma_0$, where $H^0$ is the connected component of the identity of $H$, in the Zariski topology (a subgroup of finite index). This implies that $H\Gamma_0$ is closed.
  \end{proof}

\subsection{Reduction step} 
 
 \begin{lem} \label{assumption4}
 To prove Theorem \ref{Zdense}, we can (and will) assume that for all $k\in \Z$ and $\gamma \in \Gamma_0$ such that $a^k E\gamma \cap E\neq \emptyset$, then $\gamma \in P_E$.
 \end{lem}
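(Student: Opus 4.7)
The plan is to remove from $E$ a measure-zero subset that contains all ``accidental'' intersections, i.e., the nonempty but $\mathcal{L}$-null ones. Since $\Gamma_0$ is countable, the set
\[
Z \;=\; \bigcup_{\gamma \in \Gamma_0 \setminus P_E} \; \bigcup_{k \in \Z} \bigl(a^k E \gamma \cap E\bigr)
\]
is a countable union of $\mathcal{L}$-null sets: by definition of $P_E$, for each $\gamma \notin P_E$ the full $k$-union already has measure zero, a fortiori each summand. Hence $\mathcal{L}(Z)=0$, and $E' := E \setminus Z$ still has positive $\mathcal{L}$-measure.

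I then need to check two things: that $E'$ satisfies the desired property, and that replacing $E$ by $E'$ is harmless. For the property, suppose $a^k E' \gamma \cap E' \neq \emptyset$ for some $k$. If $\gamma \notin P_E$, then $a^k E' \gamma \cap E' \subset a^k E \gamma \cap E \subset Z$, which is disjoint from $E'$, a contradiction; so $\gamma \in P_E$. To transfer this membership to $P_{E'}$ I would use that the $\Gamma_0$-invariant lift $\widetilde{\mu}$ of $\mu$ is also invariant under left translation by $a$, so the sets $a^k Z \gamma$ are $\widetilde{\mu}$-null for every $k \in \Z$, $\gamma \in \Gamma_0$. Since
\[
\bigl(a^k E \gamma \cap E\bigr) \;\setminus\; \bigl(a^k E' \gamma \cap E'\bigr) \;\subset\; \bigl(a^k Z \gamma \cap E\bigr) \cup \bigl(a^k E \gamma \cap Z\bigr),
\]
a countable union over $k$ yields $\mathcal{L}\bigl(\cup_k a^k E \gamma \cap E\bigr) = \mathcal{L}\bigl(\cup_k a^k E' \gamma \cap E'\bigr)$, and therefore $P_E = P_{E'}$.

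In particular, the smallest algebraic subgroup of $G_0$ containing $P_{E'}$ coincides with the one containing $P_E$, so proving the conclusion of Theorem~\ref{Zdense} for $E'$ yields it for $E$, and we may rename $E'$ to $E$. There is no genuine obstacle in this reduction; the only mild subtlety is to record that the ambient lifted measure is bi-invariant under left multiplication by $a$ and right multiplication by $\Gamma_0$, which is exactly what is needed to run the symmetric-difference argument in the second step.
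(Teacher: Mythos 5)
Your proof is correct and follows essentially the same route as the paper: delete from $E$ a countable union of $\mathcal{L}$-null ``accidental intersection'' sets and observe that the resulting set has the same measure, the same set of return elements, and the desired property (the paper removes $a^kE\gamma$ for every pair $(k,\gamma)$ with $\mathcal{L}(a^kE\gamma\cap E)=0$, while you remove only the intersections attached to $\gamma\notin P_E$, which changes nothing essential). Your explicit verification that $P_{E'}=P_E$, via invariance of the lifted measure under left multiplication by $a$ and right multiplication by $\Gamma_0$, is a point the paper asserts without detail, so it is a welcome addition rather than a deviation.
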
 
 \begin{proof} 
 Consider the subset     
 $$F= E - \bigcup_{(k,\gamma)\in \Z\times \Gamma_0 \; \mathbf{s.t.} \; \mathcal{L}(a^kE\gamma\cap E)=0} a^k E\gamma.$$
 Clearly, $F$ is a subset of $E$ of the same measure, and $P_F=P_E$. So it is sufficient to prove the statement of Theorem \ref{Zdense} for $F$ instead of $E$, and $F$ satisfies the above property.
 \end{proof}
  
\subsection{Invariance of $xH\Gamma_0$}  
  
 \begin{lem} For $\mathcal{L}$-almost every $x\in E$,  $a \in xHx^{-1}$.
 \end{lem}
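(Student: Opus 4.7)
The plan is to combine Poincar\'e recurrence on $(G_0/\Gamma_0,\mu)$, the reduction of Lemma \ref{assumption4}, and the positive-eigenvalue hypothesis on $a$. First I would fix $x\in E$ enjoying two full-$\mathcal{L}$-measure properties: $x$ is a Lebesgue density point of $E$ in the ambient topology of $G_0$, and Birkhoff's ergodic theorem applies to the $a$-orbit of $x$. Combining these produces a sequence $k_i\to+\infty$ and $\gamma_i\in\Gamma_0$ such that $z_i:=a^{k_i}x\gamma_i$ lies in $E$ and converges to $x$ in $G_0$: the $k_i$ are those for which $a^{k_i}x\Gamma_0$ falls in a shrinking neighbourhood of $x\Gamma_0$ inside $E\Gamma_0/\Gamma_0$, which is of positive $\mu$-measure by the density-point property. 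The identity $a^{k_i}x\gamma_i=z_i\in E$ with $x\in E$, together with Lemma \ref{assumption4}, forces $\gamma_i\in P_E\subset H$.

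Next I would factor
$$a^{k_i}=(z_ix^{-1})\cdot(x\gamma_i^{-1}x^{-1}),$$
so that the second factor lies in the closed subgroup $xHx^{-1}$ while $u_i:=z_ix^{-1}\to e$. Thus $a^{k_i}$ is the product of an element of $xHx^{-1}$ and a factor tending to the identity.

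Finally I would upgrade this decomposition to $a\in xHx^{-1}$. Applying Chevalley's theorem (possibly passing to an appropriate tensor power to kill the order-$2$ characters of $H$ detected in the proof of Lemma \ref{Hisclosed}), there exists a $\Q$-rational representation $\rho_0:\mathbf{G}_0\to\mathbf{GL}(V)$ and a vector $v\in V$ whose $G_0$-stabilizer equals $xHx^{-1}$. Writing $a^{k_i}=u_ih_i$ with $h_i\in xHx^{-1}$ yields
$$\rho_0(a)^{k_i}v=\rho_0(u_i)\rho_0(h_i)v=\rho_0(u_i)v\longrightarrow v.$$
Decomposing $v$ along the generalized eigenspaces of $\rho_0(a)$, a component corresponding to an eigenvalue $\lambda$ evolves as $p(k_i)\lambda^{k_i}$ for a polynomial $p$; since all eigenvalues of $\rho_0(a)$ are real and positive by hypothesis, such a component can return to its initial value as $k_i\to+\infty$ only if $\lambda=1$ and $p$ is constant. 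Hence $v$ lies in the $1$-eigenspace of $\rho_0(a)$, so $\rho_0(a)v=v$, i.e., $a\in\mathrm{Stab}_{G_0}(v)=xHx^{-1}$. The main obstacle is this last rigidity step: it depends crucially on the positivity of the eigenvalues, and would fail in general (for instance, for an element with eigenvalues on the unit circle, the sequence $\rho_0(a)^{k_i}v$ could return arbitrarily close to $v$ without $v$ being fixed).
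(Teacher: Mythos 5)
Your proof is correct, and it shares the paper's skeleton --- recurrence producing returns $a^{k_i}x\gamma_i\in E$ converging to $x$ with $\gamma_i\in P_E\subset H$ (via Lemma \ref{assumption4}), a Chevalley representation, and the positivity of the eigenvalues of $a$ --- but the endgame is genuinely different. The paper keeps $H$ as the stabilizer of a \emph{line} $D$, extracts only the projective convergence $\alpha(a)^{n_k}\alpha(x)D\to\alpha(x)D$, and identifies the dominant term in the Jordan--Chevalley expansion to show that $\alpha(x)D$ lies in an eigenspace of $\alpha(a)$ for \emph{some} positive eigenvalue; that already gives $a\in xHx^{-1}$, since only the line must be preserved. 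You instead realize $xHx^{-1}$ as the stabilizer of a \emph{vector}, which requires the extra input that the $\Q$-character by which $H$ acts on the Chevalley line squares to the trivial character --- this is legitimately available, being exactly what the proof of Lemma \ref{Hisclosed} establishes --- so that in the tensor (or symmetric) square the vector $v_0\otimes v_0$ has stabilizer exactly $H$ (both inclusions use the order-two bound), and $v=\rho_0(x)(v_0\otimes v_0)$ has stabilizer $xHx^{-1}$. In exchange, your factorization $a^{k_i}=u_ih_i$ with $u_i\to e$ and $h_i\in xHx^{-1}$ yields honest, not merely projective, convergence $\rho_0(a)^{k_i}v\to v$, and the rigidity step becomes simpler and sharper: with all eigenvalues real and positive, the last nonzero coordinate of $v$ in each Jordan block evolves as $\theta^{k_i}$, forcing $\theta=1$, and the nilpotent part would make the adjacent coordinate diverge, so $v$ is genuinely fixed and the relevant eigenvalue is $1$ (the paper's argument allows any eigenvalue, which is why it can stay projective and skip the character-killing step). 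Two minor presentational points: ``Lebesgue density point'' is not quite the right notion, since $\mathcal{L}$ may be singular; what you need, and what holds $\mathcal{L}$-almost everywhere on $E$, is that every ball around $x$ meets $E$ in positive $\mathcal{L}$-measure, after which Poincar\'e recurrence (Birkhoff is not needed) applied to the countable family $E\cap B(x,1/m)$ produces your sequence $z_i\to x$; this is the same standard argument the paper compresses into ``by Poincar\'e recurrence''.
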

 \begin{proof}
  By a Theorem of Chevalley \cite[Thm 5.1]{MR1102012}, there exists a finite dimensional representation $\alpha$ of $\mathbf{G}$ such that $H$ is the stabilizer of a line $D$, that is $H=\{g \in G_0 \, : \, \alpha(g)D=D\}$. By Poincar\'e recurrence Theorem, for $\mathcal{L}$-almost every $x \in E$, there exists a sequence $n_k\to +\infty$ and $\gamma_k \in \Gamma_0$ such that $a^{n_k}x\gamma_k \to x$ and $a^{n_k} x \gamma_k \in E$. Fix such an $x\in E$.\\
  
  By Lemma \ref{assumption4}, we known that $\gamma_k \in P_E\subset H$. It follows that $\alpha(\gamma_k)D=D$, so since $a^{n_k}x\gamma_k \to x$, we have
\begin{equation}\label{eqstab}
\lim_{k\to +\infty} \alpha(a)^{n_k}\alpha(x)D=\alpha(x)D.
\end{equation}

  By assumption, $a$ is triangularizable with positive eigenvalues, so $\alpha(a)$ has only positive, real eigenvalues. We claim that \eqref{eqstab} implies that
  $\alpha(x)D$ is contained in one of its eigenspaces.  
    
 Let  
 $$\alpha(a)=\delta+\eta,$$
 be the Jordan-Chevalley decomposition of $\alpha(a)$, that is: $\delta$ and $\eta$ commutes, $\eta$ is nilpotent, $\delta$ diagonalizable (with positive, real eigenvalues).
 If $p$ is the nilpotent index of $\eta$, for $k$ such that $n_k>p$, 
 $$\alpha(a)^{n_k}= \sum_{i=0}^{p-1} \binom{n_k}{i} \delta^{n_k-i}\eta^i.$$
 Let $v\in \alpha(x)D-\{0\}$, and $v=\sum_{\theta} v_\theta$ be its decomposition along the eigenspaces of $\delta$ corresponding to the eigenvalues $\{\theta\}$ of $\delta$. Then
  $$\alpha(a)^{n_k}(v) = \sum_\theta \left(\sum_{i=0}^{p-1} \binom{n_k}{i} \theta^{n_k-i}\eta^i(v_\theta) \right).$$
 As a function of $n_k$, this is a combination of polynomials and powers of eigenvalues. 
 If $\theta_0$ is the highest eigenvalue $\theta$ for which $v_\theta\neq 0$, and $i_0$ is the largest $i$ for which $\eta^i(v_{\theta_0})\neq 0$, then we have the asymptotic as $k\to +\infty$,
  $$\alpha(a)^{n_k}(v) \sim \binom{n_k}{i_0} \theta^{n_k-i_0}\eta^{i_0}(v_{\theta_0}).$$
  
 However, we know that projectively, $\alpha(a)^{n_k}\alpha(x)D \to \alpha(x)D$, so $v$ is colinear to $\eta^{i_0}(v_{\theta_0})$. 
As $\eta$ preserves the eigenspace of $\delta$ associated to $\theta_0$, 
$$\alpha(a)\eta^{i_0}(v_{\theta_0})=\theta_0 \eta^{i_0}(v_{\theta_0}) + \eta^{i_0+1}(v_{\theta_0})=\theta_0 \eta^{i_0}(v_{\theta_0}),$$
because by definition of $i_0$, $\eta^{i_0+1}(v_{\theta_0})=0$. This shows that $\eta^{i_0}(v_{\theta_0})$ is an eigenvector, and so is $v$.

 We have proved that $\alpha(x)D$ is contained in an eigenspace of $\alpha(a)$. So $D$ is stabilized by $\alpha(x^{-1}ax)$, meaning that $a \in xHx^{-1}$, as required.
 \end{proof}
 
\subsection{Conclusion of the proof of Theorem \ref{Zdense}} 
 
\begin{lem} \label{HequalG}
We have $H=G_0$.
\end{lem}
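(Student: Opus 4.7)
The plan is to combine the previous lemma (that $a \in xHx^{-1}$ for $\mathcal{L}$-a.e.\ $x \in E$) with Birkhoff's theorem and the non-concentration hypothesis on $\mu$. The key observation is that whenever $x^{-1}ax \in H$, the translate $xH\Gamma_0$ is closed and $a$-invariant; if moreover $x\Gamma_0$ is Birkhoff-generic, then $\mu(xH\Gamma_0) = 1$, and at that point non-concentration will force $xHx^{-1}=G_0$.

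First I would translate the previous lemma: for $\mathcal{L}$-a.e.\ $x \in E$, $x^{-1}ax \in H$, so $a^n x = x(x^{-1}ax)^n \in xH$ for every $n\in\Z$, and hence the entire two-sided $a$-orbit of $x\Gamma_0$ lies in $xH\Gamma_0$. This set is closed as a translate of $H\Gamma_0$ by Lemma~\ref{Hisclosed}, and $a$-invariant since $a \cdot xH\Gamma_0 = x(x^{-1}ax)H\Gamma_0 = xH\Gamma_0$. Next I would apply Birkhoff's theorem to a countable dense family of continuous test functions and use ergodicity to produce a $\mu$-null set $N_0 \subset G_0/\Gamma_0$ outside of which the empirical measures $\frac1N \sum_{n<N} \delta_{a^n y}$ converge weakly to $\mu$. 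Since $\pi:(I,\mathcal{L})\to(G_0/\Gamma_0,\mu)$ is measure preserving, $\pi^{-1}(N_0)\cap I$ is $\mathcal{L}$-null, and I can select $x \in E$ satisfying both $x^{-1}ax \in H$ and $x\Gamma_0 \notin N_0$. For such $x$, Portmanteau applied to the closed set $xH\Gamma_0$ gives $\mu(xH\Gamma_0)\geq 1$, hence $\mu(xH\Gamma_0) = 1$.

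Finally, $xHx^{-1}$ is a closed algebraic subgroup of $G_0$ containing $a$, and the closed orbit $(xHx^{-1})\cdot x\Gamma_0 = xH\Gamma_0$ has full $\mu$-measure. The non-concentration hypothesis then forbids $xHx^{-1}$ from being a strict subgroup of $G_0$, so $xHx^{-1}=G_0$ and therefore $H=G_0$. The main delicate step I expect is the bookkeeping required to avoid simultaneously the $\mathcal{L}$-null exceptional set from the previous lemma and the $\mu$-null exceptional set from Birkhoff; this is resolved by the measure-preserving property of $\pi$, after which the rest of the argument is a direct application of non-concentration.
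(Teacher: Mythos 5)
Your proof is correct and follows essentially the same route as the paper: pick a typical $x\in E$ with $a\in xHx^{-1}$, use closedness of $xH\Gamma_0$ (Lemma \ref{Hisclosed}) to conclude $\mu(xH\Gamma_0)=\mu((xHx^{-1})x\Gamma_0)=1$, and finish by applying non-concentration to the algebraic subgroup $xHx^{-1}\ni a$. The only cosmetic difference is that you get full measure via Birkhoff genericity and Portmanteau, whereas the paper uses that the $a$-orbit of a $\mu$-typical point is dense in $\mathrm{supp}(\mu)$, so the closed set $xH\Gamma_0$ contains the support.
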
 
 \begin{proof}
  By ergodicity of $a$ with respect to $\mu$, for $\mu$-almost every $x\Gamma_0$, $a^{\Z}x\Gamma_0$ is dense in the support of $\mu$.
  By the previous Lemma, we have also for almost every $x\in E$, $a \in xHx^{-1}$, so 
  $$a^{\Z}x\Gamma_0\subset xH\Gamma_0.$$
  Consider a typical $x \in E$ satisfying both of these properties.
  By Lemma \ref{Hisclosed}, $xH\Gamma_0$ is a closed set. By density of the $a$-orbit of $x$ in the support of $\mu$, this implies that $supp(\mu)\subset xH\Gamma_0$, so
  $$\mu(xH\Gamma_0)=\mu\left( (xHx^{-1}) x\Gamma_0\right)=1.$$
  By assumption, $\mu$ is non-concentrated, so $H=G_0$ necessarily.  
 \end{proof}

\section{Proof of Theorem \ref{algasyn}}
\label{application}

 The proof is by contradiction. We assume that $f_\beta$ is not asynchronous. By translate of a $\Q$-subspace of $V=\R^d$, we mean a set $T\subset V$ of the form $T=v+W$, where $v\in V$, and $W\subset V$ a subspace of $V$ defined over $\Q$; in particular, there is no rationality assumption on $v$, and $T$ itself does not have to be defined over $\Q$. The $\Q$-subspace $W$ is called the direction of $T$. \\
 
 If non-empty, the intersection of two translates of $\Q$-subspaces is again a translate of $\Q$-subspace. This property allows us to define, for
 a set $E\subset I$ of positive $\mathcal{L}$-measure, the set $T_E$ which is the smallest translate of $\Q$-subspace containing 
 $\{\rho(x)^{-1}\beta\}_{x\in E}$. We denote by $W_E$ its direction.\\
 
 \begin{lem} \label{notall}
 There exist $E\subset I$ of positive measure such that $W_E\neq V$.
 \end{lem}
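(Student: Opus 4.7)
The plan is to unpack the negation of asynchronicity in terms of characters of $K=\mathbf{T}^d$, which are indexed by $\mathbf{Z}^d$, and then observe that such a character naturally provides an affine hyperplane defined over $\mathbf{Q}$ that captures many values of $\rho(x)^{-1}\beta$.

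Since $f_\beta$ is assumed not to be asynchronous, the equivalent formulation in the definition of asynchronicity gives a non-trivial character $\chi \in \hat{K}$ such that the pushforward $(\chi \circ f_\beta)_*\mathcal{L}$ has an atom. Writing $\chi$ as $v \mapsto e^{2i\pi \langle m, v \rangle}$ for some $m \in \mathbf{Z}^d\setminus\{0\}$, this atom corresponds to some $c \in \R$ such that the set
$$A = \{x \in I : \langle m, \rho(x)^{-1}\beta \rangle \equiv c \pmod{\mathbf{Z}}\}$$
has positive $\mathcal{L}$-measure.

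The set $A$ decomposes as the countable disjoint union over $n \in \mathbf{Z}$ of
$$A_n = \{x \in I : \langle m, \rho(x)^{-1}\beta \rangle = c + n\}.$$
By countable additivity, there exists some $n_0 \in \mathbf{Z}$ with $\mathcal{L}(A_{n_0})>0$. Set $E = A_{n_0}$. For every $x \in E$, the vector $\rho(x)^{-1}\beta$ lies in the affine hyperplane
$$H = \{v \in V : \langle m, v \rangle = c+n_0\},$$
which is a translate of the proper $\mathbf{Q}$-subspace $W = \ker(m) \subset V$ (proper since $m\neq 0$, and defined over $\mathbf{Q}$ since $m \in \mathbf{Z}^d$). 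Hence the smallest translate of a $\mathbf{Q}$-subspace containing $\{\rho(x)^{-1}\beta\}_{x\in E}$, namely $T_E$, is contained in $H$, and therefore $W_E \subset W \subsetneq V$, which gives the conclusion.

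There is no real obstacle here; the proof is essentially a dictionary translation between characters of $K$ and $\mathbf{Q}$-defined linear forms on $V$, combined with a countable additivity argument to pass from the congruence modulo $\mathbf{Z}$ to a single affine hyperplane.
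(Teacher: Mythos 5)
Your proof is correct and follows essentially the same route as the paper: negate asynchronicity to get a non-trivial character $\chi$ indexed by some $m\in\Z^d\setminus\{0\}$ with an atom for $(\chi\circ f_\beta)_*\mathcal{L}$, decompose the corresponding positive-measure set over the countably many lifts $c+n$, and conclude that the chosen piece is carried into an affine hyperplane which is a translate of the proper $\Q$-subspace $\ker(m)$, so $W_E\subsetneq V$. No gaps.
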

 \begin{proof}    
  Since $f_\beta$ is not asynchronous, so for some non-trivial character $\chi\in \hat{K}$, $\chi\circ f_\beta$ is constant on a set $F$ of positive $\mathcal{L}$-measure. There exists $\mathbf{n}\in \Z^d-\{0\}$, such that
  $$\chi(\bar{v})=e^{2i\pi \, \langle \mathbf{n}, v\rangle}.$$
  So the set $\{\langle \mathbf{n},  \rho(x)^{-1}\beta \rangle \}_{x\in F}$ is contained in a countable set of the form $c+\Z$, for some $c\in \R$.
 This implies that at least one of the sets
 $$F_m  =\{ x \in F \; : \; \langle \mathbf{n},  \rho(x)^{-1}\beta \rangle=c+m \},$$
 for $m\in \Z$, has positive $\mathcal{L}$-measure. By construction, for such a $m\in \Z$, 
 $$T_{F_m}\subset \{ v \in V \; : \; \langle \mathbf{n},  v \rangle = c+m\},$$
 the right-hand side set being the translate of a proper $\Q$-subspace, $E=F_m$ satisfies the Lemma.
 \end{proof}
 
  We now fix $E\subset I$ a set of positive measure, such that $W_E$ is of minimal possible dimension (it exists). By Lemma \ref{notall}, $W_E\neq V$.  
  Like in Theorem \ref{Zdense}, we define
   $$P_E=\left\{\gamma \in \Gamma_0 \, : \, \mathcal{L}\left( \cup_{k\in \Z} a^k E\gamma\cap E \right)>0 \right\}.$$

  \begin{lem} \label{stability}
For all $\gamma \in P_E$, $\rho(\gamma)W_E=W_E$.
  \end{lem}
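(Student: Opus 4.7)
The approach is to combine the eigenvector property of $\beta$ under $\rho(a)$ with the minimality of $\dim W_E$: a return of the $a$-dynamics to $E$ realized by $\gamma$ will transport $\{\rho(x)^{-1}\beta\}_{x\in E}$ by $\theta^{-k}\rho(\gamma)^{-1}$ into itself, and the minimal choice of $\dim W_E$ will force the direction $W_E$ to be stabilized.

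Fix $\gamma \in P_E$. By countable subadditivity there is some $k\in \Z$ with $\mathcal{L}(a^kE\gamma \cap E) > 0$. I then set $E_1 := a^{-k}(a^kE\gamma \cap E)\gamma^{-1}$; this is a subset of $E$, and by the $a$-invariance of $\mathcal{L}$ (inherited from $\mu$) together with the $\Gamma_0$-invariance of the lift of $\mu$, it has the same positive measure as $a^kE\gamma \cap E$. For every $x \in E_1$, the point $y := a^k x \gamma$ lies in $E$, and using $\rho(a)\beta = \theta\beta$,
$$\rho(y)^{-1}\beta \;=\; \rho(\gamma)^{-1}\rho(x)^{-1}\rho(a)^{-k}\beta \;=\; \theta^{-k}\,\rho(\gamma)^{-1}\rho(x)^{-1}\beta.$$

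Writing $T_E = v_0 + W_E$, both sides of this identity lie in $T_E$, so subtracting the identity for two points $x, x' \in E_1$ (with images $y, y' \in E$) yields
$$\theta^{-k}\rho(\gamma)^{-1}\bigl(\rho(x)^{-1}\beta - \rho(x')^{-1}\beta\bigr) \;=\; \rho(y)^{-1}\beta - \rho(y')^{-1}\beta \;\in\; W_E.$$
Multiplying by $\theta^k$, which preserves the $\R$-space $W_E$, I see that $\rho(\gamma)^{-1}$ sends every difference $\rho(x)^{-1}\beta - \rho(x')^{-1}\beta$ with $x, x' \in E_1$ into $W_E$. The $\R$-subspace $U := \{v \in V : \rho(\gamma)^{-1}v \in W_E\}$ is itself defined over $\Q$ (since $\rho(\gamma)\in \mathbf{GL}(d,\Z)$ and $W_E$ is defined over $\Q$), so $U$ contains the smallest $\Q$-subspace containing all such differences, which by definition of $T_{E_1}$ is precisely $W_{E_1}$.

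To close, I invoke the minimality: since $E_1 \subset E$ has positive measure, automatically $W_{E_1} \subseteq W_E$, and by the minimal choice of $\dim W_E$ this inclusion is an equality. Hence $W_E \subseteq U$, that is $\rho(\gamma)^{-1}W_E \subseteq W_E$, and equality of dimensions gives $\rho(\gamma)W_E = W_E$, as required. The only delicate point is the bookkeeping identification of $W_{E_1}$ with the smallest $\Q$-subspace containing the differences; once that definitional step is in place, the argument is a direct chase from the eigenvector equation.
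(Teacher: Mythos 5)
Your proof is correct and takes essentially the same route as the paper: the same return-time identity $\rho(y)^{-1}\beta=\kappa^{-k}\rho(\gamma)^{-1}\rho(x)^{-1}\beta$, the rationality of $\rho(\gamma)$, and the minimality of $\dim W_E$ over positive-measure subsets do all the work. The paper argues at the level of translates with $F=a^kE\gamma\cap E$ (showing $T_F\subset \kappa^{-k}\rho(\gamma)^{-1}T_E$ and $T_F=T_E$), while you transport to $E_1$ and pass to differences so as to work directly with directions; both your positive-measure claim for $E_1$ and your identification of $W_{E_1}$ with the smallest $\Q$-subspace containing the differences are sound, so this is only a cosmetic variation.
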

 \begin{proof} 
  Recall that $\beta$ is an eigenvector for $\rho(a)$, denote by $\kappa$ the corresponding eigenvalue.
  By definition of $P_E$, there exists $k\in \Z$ with $\mathcal{L}\left( a^k E\gamma\cap E \right)>0$. Let $F=a^k E\gamma\cap E$, then for all $x\in F$,
  there exist $y \in E$ such that $x=a^k y \gamma$. We have: 
  $$\rho(x)^{-1}\beta=\rho(\gamma^{-1}y^{-1}a^{-k}) \beta=\kappa^{-k} \rho(\gamma)^{-1} \rho(y)^{-1} \beta \in \kappa^{-k} \rho(\gamma)^{-1} T_E.$$
  Note that since $\rho(\gamma)^{-1}$ is a matrix with integer coefficients,  $\kappa^{-k} \rho(\gamma^{-1}) T_E$ is also the translate of a $\Q$-subspace, containing $\{\rho(x)^{-1}\beta\}_{x\in F}$. By definition of $T_F$, this means that 
  $$T_F \subset \kappa^{-k} \rho(\gamma)^{-1} T_E.$$
   Since $F\subset E$, $T_F\subset T_E$, and because $E$ was chosen such that $T_E$ is of minimal possible dimension, we have $T_E=T_F$, so from the rank-nullity Theorem,
  $$\kappa^{-k} \rho(\gamma)^{-1} T_E  = T_E.$$  
  This implies equality for the directions, $\rho(\gamma)^{-1} W_E  = W_E$, and multiplication by $\rho(\gamma)$ concludes the proof.  
 \end{proof}

 \begin{lem} We have $W_E=\{0\}$, that is, the map $x\mapsto \rho(x)^{-1}\beta$ is constant on $E$.
 \end{lem}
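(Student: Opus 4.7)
The plan is to combine Lemma \ref{stability} with Theorem \ref{Zdense} and the $\Q$-irreducibility of $\rho$. First I would introduce the stabilizer
\[
\mathbf{S} = \{g \in \mathbf{G}_0 \,:\, \rho(g) W_E = W_E\}.
\]
Since $W_E$, being the direction of the smallest translate of a $\Q$-subspace containing $\{\rho(x)^{-1}\beta\}_{x\in E}$, is itself a $\Q$-subspace of $V$, and since $\rho$ is defined over $\Q$, the stabilizer $\mathbf{S}$ is a $\Q$-algebraic subgroup of $\mathbf{G}_0$. By Lemma \ref{stability}, $P_E \subset \mathbf{S}(\R)$.

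Next I would invoke Theorem \ref{Zdense}. Its hypotheses are available here: $\mathbf{G}_0$ is semisimple (so in particular has no nontrivial $\Q$-characters), $a$ is triangularizable with positive eigenvalues, and $\mu$ is $a$-invariant, ergodic, and non-concentrated by the standing assumptions of Theorem \ref{algasyn}. The conclusion is that the smallest algebraic subgroup of $G_0$ containing $P_E$ is $G_0$ itself, which together with the inclusion $P_E \subset \mathbf{S}(\R)$ forces $\mathbf{S} = \mathbf{G}_0$. In other words, $W_E$ is $\rho(\mathbf{G}_0)$-invariant.

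To finish, I would use the $\Q$-irreducibility of $\rho$: the only $\rho(\mathbf{G}_0)$-invariant $\Q$-subspaces of $V$ are $\{0\}$ and $V$. But Lemma \ref{notall}, combined with the minimality of $\dim W_E$ in the choice of $E$, guarantees that $W_E \neq V$. Hence $W_E = \{0\}$, so $T_E$ reduces to a single point, and equivalently $x \mapsto \rho(x)^{-1}\beta$ is constant on $E$, as required.

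The argument is essentially a clean chaining of already-proved ingredients, so I do not expect a serious obstacle. The one point that requires a little care is the rationality of $\mathbf{S}$, which would fail if $W_E$ were only an $\R$-subspace; this is precisely why the whole setup was tailored around translates of \emph{$\Q$-subspaces} from the outset (and why Lemma \ref{notall} carefully extracts $W_E \neq V$ from a non-asynchronicity defect defined by an integral character $\mathbf{n} \in \Z^d$).
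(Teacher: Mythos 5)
Your proposal is correct and follows essentially the same route as the paper: the stabilizer of $W_E$ is a $\Q$-algebraic subgroup containing $P_E$ by Lemma \ref{stability}, hence equals $G_0$ by Theorem \ref{Zdense}, and $\Q$-irreducibility plus the minimal choice of $E$ (via Lemma \ref{notall}) forces $W_E=\{0\}$. No gaps; the rationality remark about $W_E$ being defined over $\Q$ is exactly the point the paper also relies on.
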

 \begin{proof}
  The subgroup 
$Stab_{G_0}(W_E)=\{g\in G_0 \; : \; \rho(g)W_E=W_E \}$ is an algebraic subgroup containing $P_E$, by Lemma \ref{stability}. By Theorem \ref{Zdense},
this group is $G_0$. Since $\rho$ is irreducible over $\Q$ and $W_E$ is defined over $\Q$, $W_E=\{0\}$, or  $W_E=V$. But the latter cannot happen, because of the choice of $E$.
 \end{proof}
 
 From now on, we fix some $x_0 \in E$. By the previous Lemma, $T_E$ is the point $\rho(x_0)^{-1}\beta$.
  
 \begin{lem}  For any $\gamma \in P_E$, $\rho(\gamma) \in Stab(\R \rho(x_0)^{-1}\beta)$.
 \end{lem}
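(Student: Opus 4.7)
The plan is to reuse, almost verbatim, the computation from the proof of Lemma \ref{stability}, but now in the degenerate situation where $W_E = \{0\}$ and hence $T_E$ is a single point.

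More precisely, I would first observe that for $\gamma \in P_E$, the argument of Lemma \ref{stability} supplies an integer $k \in \Z$ and a positive-measure subset $F = a^k E \gamma \cap E$ of $E$ such that
\[
T_F \subset \kappa^{-k}\rho(\gamma)^{-1} T_E,
\]
where $\kappa$ is the eigenvalue of $\rho(a)$ associated to $\beta$. By minimality of $\dim W_E$ (and $F \subset E$) we still have $T_F = T_E$, so that
\[
T_E = \kappa^{-k}\rho(\gamma)^{-1} T_E.
\]

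Next I would feed in the conclusion of the preceding lemma, namely $W_E = \{0\}$, which means $T_E$ is the singleton $\{\rho(x_0)^{-1}\beta\}$. The displayed equality then becomes a scalar relation: evaluating both sides at their unique point yields
\[
\rho(x_0)^{-1}\beta = \kappa^{-k}\rho(\gamma)^{-1}\rho(x_0)^{-1}\beta,
\]
and applying $\rho(\gamma)$ gives
\[
\rho(\gamma)\bigl(\rho(x_0)^{-1}\beta\bigr) = \kappa^{-k}\,\rho(x_0)^{-1}\beta.
\]
Thus $\rho(\gamma)$ sends the generator $\rho(x_0)^{-1}\beta$ of the line $\R \rho(x_0)^{-1}\beta$ to a scalar multiple of itself, which is exactly the statement that $\rho(\gamma) \in \operatorname{Stab}(\R \rho(x_0)^{-1}\beta)$.

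There is no real obstacle here: the content of the lemma is that once the minimal translate $T_E$ has collapsed to a point, the relation $T_E = \kappa^{-k}\rho(\gamma)^{-1}T_E$ ceases to be a statement about affine subspaces and becomes a one-dimensional eigenvalue relation for $\rho(\gamma)$ on the vector $\rho(x_0)^{-1}\beta$. The only mildly subtle point is to keep track of the fact that the integer $k$, and hence the scalar $\kappa^{-k}$, depends on $\gamma$; but since we only need line-stabilization (not a fixed eigenvalue), this dependence is harmless.
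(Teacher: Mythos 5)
Your proposal is correct and follows essentially the same route as the paper: both invoke the relation $\kappa^{-k}\rho(\gamma)^{-1}T_E = T_E$ established in the proof of Lemma \ref{stability} and then specialize to $T_E = \{\rho(x_0)^{-1}\beta\}$ to read off that $\rho(\gamma)$ preserves the line $\R\rho(x_0)^{-1}\beta$. Your extra remark that the scalar $\kappa^{-k}$ depends on $\gamma$ but is harmless is a fair (if implicit in the paper) observation.
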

 \begin{proof}
  Let $\gamma \in P_E$. Thus there exist $k \in \Z$ such that $\mathcal{L}(a^kE\gamma\cap E)>0$.
  In the proof of Lemma \ref{stability}, we saw that 
  $$\kappa^{-k} \rho(\gamma)^{-1} T_E  = T_E.$$  
  But since $T_E=\{\rho(x_0)^{-1}\beta\}$, this means that $\rho(\gamma)^{-1}$ stabilizes the line through $\rho(x_0)^{-1}\beta$.
 \end{proof} 

  The end of the proof of Theorem \ref{algasyn} is given by the following contradictory Claim.
 
 \begin{lem} The space $V$ is one-dimensional.
 \end{lem}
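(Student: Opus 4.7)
The plan is to apply Theorem \ref{Zdense} to the algebraic subgroup of $G_0$ stabilizing the line $L = \R\rho(x_0)^{-1}\beta$, invoke the semisimplicity of $\mathbf{G}_0$ to trivialize the resulting character, and then use the $\Q$-irreducibility of $\rho$ to force $\dim V = 1$, contradicting the standing hypothesis $\dim V > 1$.

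I would first observe that by the previous lemma, $P_E$ is contained in $\mathrm{Stab}_{G_0}(L) = \{g \in G_0 \, : \, \rho(g)L = L\}$, which is an algebraic subgroup of $G_0$. Theorem \ref{Zdense} then yields $\mathrm{Stab}_{G_0}(L) = G_0$, and passing to the Zariski closure, $\mathbf{G}_0$ itself preserves $L$. The induced action $\mathbf{G}_0 \to \mathbf{GL}(L)$ is then a rational character, which must be trivial because $\mathbf{G}_0$ is semisimple. Hence $L$ is contained in the subspace $V^{\mathbf{G}_0}$ of $\mathbf{G}_0$-fixed vectors.

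Next, I would argue that $V^{\mathbf{G}_0}$ is a $\Q$-defined subspace of $V$: it equals $\bigcap_{g \in \mathbf{G}_0(\Q)} \ker(\rho(g) - \mathrm{id}_V)$, an intersection of $\Q$-defined kernels. It is non-zero, since it contains $L$. The $\Q$-irreducibility of $\rho$ therefore forces $V^{\mathbf{G}_0} = V$, so $\rho$ is the trivial representation. But then every $\Q$-subspace of $V$ is $\mathbf{G}_0$-invariant, and $\Q$-irreducibility applied a second time gives $\dim V = 1$, the desired contradiction.

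The main obstacle I anticipate is bridging the $\R$-algebraic data (the line $L$, which a priori is not defined over $\Q$) with the $\Q$-rational structure in terms of which the irreducibility hypothesis is phrased. Semisimplicity of $\mathbf{G}_0$ is what rescues the argument: by collapsing any potential character on $L$ to the trivial one, it upgrades mere invariance of $L$ to pointwise fixing, producing the genuine $\Q$-defined subrepresentation $V^{\mathbf{G}_0}$ on which the $\Q$-irreducibility hypothesis can finally be brought to bear.
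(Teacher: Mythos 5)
Your proposal is correct, and its first half coincides with the paper's argument: the stabilizer of the line $\R\rho(x_0)^{-1}\beta$ is an algebraic subgroup of $G_0$ containing $P_E$, hence equals $G_0$ by Theorem \ref{Zdense}, and semisimplicity plus connectedness kills the resulting character, so the vector is actually fixed. Where you diverge is precisely at the point the paper flags as delicate, namely transferring the information about the (a priori irrational) line $L$ to the $\Q$-structure. You do it by showing that the full fixed space $V^{\mathbf{G}_0}$ is defined over $\Q$, writing it as $\bigcap_{g\in\mathbf{G}_0(\Q)}\ker(\rho(g)-\mathrm{id})$; this is valid, but it silently uses the Zariski-density of $\mathbf{G}_0(\Q)$ in $\mathbf{G}_0$, which should be justified (it holds because $\mathbf{G}_0$ is connected and $\Q$ is an infinite perfect field, so the group is unirational over $\Q$; alternatively one can argue by Galois descent that the fixed space of a $\Q$-group in a $\Q$-representation is $\Q$-defined). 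The paper instead stays entirely within its own machinery: it sets $Z=\bigcap_{\gamma\in P_E}V_1(\gamma)$, which is $\Q$-defined for the trivial reason that $P_E$ consists of integer matrices, and then applies Theorem \ref{Zdense} a \emph{second} time to the algebraic subgroup acting trivially on $Z$ to conclude that all of $G_0$ fixes $Z$. So your route trades a second use of the paper's key theorem for a standard (but uncited) rationality fact about rational points of connected $\Q$-groups; both then finish identically, using $\Q$-irreducibility twice to force $Z=V$ (resp.\ $V^{\mathbf{G}_0}=V$) and then $\dim V=1$, contradicting $\dim V>1$.
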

 \begin{proof}
  The group 
  $$\{g\in G \; : \; \rho(g) \in Stab(\R \rho(x_0)^{-1}\beta) \},$$
  is an algebraic group containing $P_E$. By Theorem \ref{Zdense}, it follows that $\rho(x_0)^{-1}\beta$ is a common eigenvector for all elements of $\rho(G_0)$ (and so is $\beta$). Were the representation $\rho$ irreducible over $\R$, this would be sufficient to conclude; however we assumed only $\Q$-irreducibility, and have no particular rationality assumption on $\beta$.\\
  
  Since $G_0$ is semisimple and connected, the eigenvalue associated to $\rho(x_0)^{-1}\beta$ is $1$ for every $g\in G_0$. Let $V_1(g)$ denote the eigenspace associated to the eigenvalue $1$ for the operator $\rho(g)$. Let $Z=\cap_{\gamma \in P_E} V_1(\gamma)$. This subspace $Z$ is defined over $\Q$, because $P_E$ consists of integral points. The set of $g\in G_0$ acting trivially on $Z$ is an algebraic subgroup containing $P_E$, so again is $G_0$. Since $\rho(x_0)^{-1}\beta\in Z$, $Z$ is of positive dimension. By $\Q$-irreducibility of $\rho$, $Z=V$ is an irreducible representation where $G_0$ acts trivially, so is one-dimensional.
 \end{proof}

\section{Ergodicity and mixing} \label{ergodproof}

 We now prove Theorem \ref{ergodd}. By assumption, $a$ is triangularizable with positive eigenvalues. We separate the proof in two cases.\\
 
\noindent \underline{Case 1}: $\rho(a)$ is unipotent.\\

 Let $\beta \in V-\{0\}$ be an eigenvector for $\rho(a)$, its eigenvalue is $1$.
 
Notice that the actions of $(e,\beta)$ and $(a,\alpha)$ commute: since $\rho(a)\beta=\beta$, we have
 $$(a,\alpha)(e,\beta)=(a,\alpha+\rho(a)\beta)=(a,\rho(e)\alpha+\beta)=(e,\beta)(a,\alpha).$$

Consider the ergodic decomposition of $\lambda$ with respect to the action of $(a,\alpha)$: there exists a measure $m$ on the set of ergodic, $(a,\alpha)$-invariant measures on $G/\Gamma$, such that 
$$\lambda=\int \nu\, dm(\nu).$$
If we apply the projection map $\pi : G/\Gamma \to G_0/\Gamma_0$ to this equality, we obtain
$$\mathcal{L}=\mu=\int (\pi)_* \nu \, dm(\nu),$$
where $(\pi)_*\nu$ are $a$-invariant. Since $\mu$ is $a$-ergodic, we have that $(\pi)_*\nu=\mu$, $m$-almost surely. Therefore, $m$ is supported on $\mathcal{M}_\mathcal{\mu}$. But since $(e,\beta)$ commutes with $(a,\alpha)$, $(e,\beta)_* m$ is the measure associated to the ergodic decomposition of
$(e,\beta)_*\lambda=\lambda$. This implies that $m$ is $(e,\beta)$-invariant, and by Corollary \ref{coroll}, $m$ is the Dirac measure at $\lambda$.
This concludes the proof of the ergodicity.\\
 
\noindent \underline{Case 2}: $\rho(a)$ is not unipotent. Thus $\rho(a)$ has some of its eigenvalues different from $1$. Since $G_0$ is semisimple, $det(\rho(a))=1$ so there exist at least one eigenvalue $\kappa<1$. Let $\beta\in V$ be an eigenvector of $\rho(a)$ associated to $\kappa$. \\

 Notice that , for $k\geq 0$,
 $$ (a,\alpha)^k (e,\beta) =(a^k, \rho(a)^k\beta+  \sum_{i=0}^{k-1} \rho(a)^i\alpha)=(e,\kappa^k \beta)(a,\alpha)^k,$$
 and $(e,\kappa^k \beta)\to (e,0)$ when $k\to +\infty$. This implies that the distance (with respect to a distance $d_{G/\Gamma}$ on $G/\Gamma$ induced by a right-$G$-invariant riemannian distance on $G$)  between $(a,\alpha)^k (x,v)\Gamma$ and $(a,\alpha)^k (e,\beta) (x,v)\Gamma$ tends to zero as $k$ tends to $+\infty$. In other words, the strong stable distribution for $(a,\alpha)$, defined by
 $$W^{ss}((x,v)\Gamma)=\{(y,w)\Gamma \; : \; \lim_{k\to +\infty} d_{G/\Gamma}((a,\alpha)^k(x,v)\Gamma,(a,\alpha)^k(y,w)\Gamma)= 0 \},$$
 is invariant under the action of $(e,\beta)$.\\
 
 We first prove the claims about ergodicity and weak-mixing. \\
 
  Let $f \in L^2(G/\Gamma,\lambda)-\{0\}$ be an eigenvector for the Koopman operator of $(a,\alpha)$, that is
  $$f((a,\alpha)(x,v)\Gamma)=\omega  f((x,v)\Gamma),$$
  for $\lambda$-almost every $(x,v)\Gamma$, for some $\omega \in \C$ of modulus one. Recall that ergodicity states that any such eigenvector associated to $\omega=1$ is constant almost everywhere, and weak-mixing that any such eigenvector is constant almost everywhere and moreover $\omega=1$.\\
  
  To prove ergodicity or weak-mixing, we may (and will) restrict to the case where $f$ is bounded. By the Hopf argument, and more precisely the version proved by Coud\`ene \cite{MR2261076}, $f$ is $W^{ss}$-invariant, meaning that there exists a full measure set $\Omega \subset G/\Gamma$ such that for all $(x,v)\Gamma \in \Omega,(y,w)\Gamma)\in \Omega$ such that $(y,w)\Gamma \in W^{ss}((x,v)\Gamma)$, $f((x,v)\Gamma)=f((y,w)\Gamma)$.\\
  
  By Theorem \ref{algasyn}, the angle map $x \mapsto \rho(x)^{-1}\beta$ is asynchronous. In particular, for every strict linear rational (closed) subtorus $\mathcal{T}\subset K$, the set of $x$ such that $\rho(x)^{-1}\beta$ does not belong to $\mathcal{T}$ is of full measure. As the set of such subtorus is countable, this implies that for $\mu$-almost every $x$, the translation on $K$ given by $\rho(x)^{-1}\beta$ is ergodic. Via a linear change of variable, this means that for $\mu$-almost every $x\Gamma_0 \in G_0/\Gamma_0$, the translation by $(e,\beta)$ on the fiber above $x\Gamma_0$ is ergodic.\\
  
  As $f$ is $(e,\beta)$-invariant, for $\mu$-almost every $x\Gamma_0$, $f$ is almost everywhere constant on the fiber above $x\Gamma_0$, and merely depends on $x\Gamma_0$. Write $F \in L^2(G_0/\Gamma_0,\mu)$ for its almost everywhere value, that is $f=F\circ \pi$, $\lambda$-almost everywhere, where $\pi:G/\Gamma \to G_0/\Gamma_0$ is the fiber bundle. We have then
 $$F(ax\Gamma_0)=\omega  F(x\Gamma_0).$$
 
 If $\omega=1$, then $F$ is $a$-invariant, and by ergodicity of $a$, $F$ is constant $\mu$-a.e., and so $f$ is constant $\lambda$-a.e. . This proves the ergodicity.\\
 
 Assume now that the action of $a$ is weakly mixing on $(G_0/\Gamma_0,\mu)$. Let $f\in L^2(G/\Gamma,\lambda)$ be like previously an eigenvector for the Koopman operator of $(a,\alpha)$, $F \in L^2(G_0/\Gamma_0,\mu)$ its almost sure value depending on the fiber. By what we saw before, $F$ is an eigenvector for the Koopman operator of $a$, so $\omega=1$ by weak-mixing of $a$. By ergodicity, $F$ is constant, as was to be proved.\\

 We now assume that the action of $a$ on $(G_0/\Gamma_0,\mu)$ is strongly mixing, and wish to prove that $(a,\alpha)$ is also strongly mixing. Recall that strong mixing of $a$ is equivalent to the fact that for all $F\in L^2(G_0/\Gamma_0,\mu)$, $F\circ a^k$ converges weakly to a constant as $k\to +\infty$.\\
 
 Let $f \in L^2(G/\Gamma,\lambda)$. Let $g \in L^2(G/\Gamma,\lambda)$ be any weak limit of $f\circ (a,\alpha)^k$  as $k\to +\infty$ along a subsequence. 
 By another result of Coud\`ene \cite{MR2297089}, generalizing a result of Babillot, $g$ is $W^{ss}$-invariant. By ergodicity of $(e,\beta)$ on almost every fiber, $g=G\circ \pi$ almost surely, where $G\in L^2(G_0/\Gamma_0,\mu)$. Define
 $$F(x\Gamma_0)=\int_{\R^d/\rho(x)\Z^d} f((x,v)\Gamma) dHaar_{\R^d/\rho(x)\Z^d}(v),$$
 the mean value of $f$ on each fiber. Let $H \in L^2(G_0/\Gamma_0,\mu)$ be a test-function. Then 
 
 $$\int_{G/\Gamma} (f\circ (a,\alpha)^k)    (H \circ \pi) d\lambda=\int_{G_0/\Gamma_0} (F\circ a^k)  . H  d\mu.$$
Taking the limits in the left-hand side and right-hand side respectively along the subsequence, using the strong mixing property for $a$, gives:
$$\int_{G/\Gamma} (G\circ \pi) (H\circ \pi) d\lambda = \left(\int_{G_0/\Gamma_0} F d\mu\right) \left(\int_{G_0/\Gamma_0} H d\mu\right),$$
in other words,
$$\int_{G_0/\Gamma_0} G\, H d\mu = \left(\int_{G_0/\Gamma_0} F d\mu\right) \left(\int_{G_0/\Gamma_0} H d\mu\right),$$
which implies that $G$ is $\mu$-almost everywhere the constant $\int_{G_0/\Gamma_0} F d\mu=\int_{G/\Gamma} f d\lambda$. Therefore the only possible weak limit of $f \circ (a,\alpha)^k$ is the above constant. By weak compactness of the ball of radius $\|f\|_ 2$ in $L^2(G/\Gamma,\lambda)$, this proves that this sequence must converge weakly to $\int_{G/\Gamma} f d\lambda$, as required.

\bibliography{biblio}{}

\begin{thebibliography}{10}

\bibitem{MR1450400}
Jon Aaronson.
\newblock {\em An introduction to infinite ergodic theory}, volume~50 of {\em
  Mathematical Surveys and Monographs}.
\newblock American Mathematical Society, Providence, RI, 1997.

\bibitem{MR1870070}
P.~Arnoux and A.~M. Fisher.
\newblock The scenery flow for geometric structures on the torus: the linear
  setting.
\newblock {\em Chinese Ann. Math. Ser. B}, 22(4):427--470, 2001.

\bibitem{MR0244260}
Armand Borel.
\newblock {\em Introduction aux groupes arithm\'etiques}.
\newblock Publications de l'Institut de Math\'ematique de l'Universit\'e de
  Strasbourg, XV. Actualit\'es Scientifiques et Industrielles, No. 1341.
  Hermann, Paris, 1969.

\bibitem{MR1102012}
Armand Borel.
\newblock {\em Linear algebraic groups}, volume 126 of {\em Graduate Texts in
  Mathematics}.
\newblock Springer-Verlag, New York, second edition, 1991.

\bibitem{MR2261076}
Yves Coud{\`e}ne.
\newblock The {H}opf argument.
\newblock {\em J. Mod. Dyn.}, 1(1):147--153, 2007.

\bibitem{MR2297089}
Yves Coudene.
\newblock On invariant distributions and mixing.
\newblock {\em Ergodic Theory Dynam. Systems}, 27(1):109--112, 2007.

\bibitem{MR2060024}
Noam~D. Elkies and Curtis~T. McMullen.
\newblock Gaps in {${\sqrt n}\bmod 1$} and ergodic theory.
\newblock {\em Duke Math. J.}, 123(1):95--139, 2004.

\bibitem{MR603625}
H.~Furstenberg.
\newblock {\em Recurrence in ergodic theory and combinatorial number theory}.
\newblock Princeton University Press, Princeton, N.J., 1981.
\newblock M. B. Porter Lectures.

\bibitem{MR2811599}
Jens Marklof and Andreas Str{\"o}mbergsson.
\newblock The {B}oltzmann-{G}rad limit of the periodic {L}orentz gas.
\newblock {\em Ann. of Math. (2)}, 174(1):225--298, 2011.

\bibitem{MR2753608}
Uri Shapira.
\newblock A solution to a problem of {C}assels and {D}iophantine properties of
  cubic numbers.
\newblock {\em Ann. of Math. (2)}, 173(1):543--557, 2011.

\bibitem{MR3332893}
Andreas Str{\"o}mbergsson.
\newblock An effective {R}atner equidistribution result for
  {$\mathbf{SL}(2,\Bbb{R})\ltimes\Bbb{R}^2$}.
\newblock {\em Duke Math. J.}, 164(5):843--902, 2015.

\bibitem{Thomine}
Damien Thomine.
\newblock Keplerian shear in ergodic theory.
\newblock {\em In preparation}.

\end{thebibliography}
\bibliographystyle{plain}

\end{document}